\def\versione{\label{vers.4}} 
\newtheorem{theorem}{Theorem}[section]
\newtheorem{proposition}[theorem]{Proposition}
\newtheorem{lemma}[theorem]{Lemma}
\newtheorem{corollary}[theorem]{Corollary}
\newtheorem{examples}[theorem]{Examples}
\theoremstyle{definition}
\newtheorem{remark}[theorem]{Remark}
\numberwithin{equation}{section} \numberwithin{figure}{section}
\numberwithin{table}{section} \numberwithin{equation}{section}
\begin{document}

\title{\versione A generalization of  Kantorovich operators for convex compact subsets}
\author{Francesco Altomare, Mirella Cappelletti Montano, \\
Vita Leonessa and Ioan Ra\c{s}a \\
[0,8cm]}
\date{}
\maketitle

\begin{abstract}
\noindent
In this paper  we introduce and study a new sequence of positive linear operators acting on function spaces defined on a convex compact subset.
Their construction depends on a given Markov operator, a positive real number and a sequence of probability Borel measures. By considering special cases
of these parameters for particular convex compact subsets we obtain the classical Kantorovich operators defined in the one-dimensional and multidimensional setting together with several of their wide-ranging generalizations scattered in the literature. We investigate the approximation properties of these operators by also providing several estimates of the rate of convergence. Finally,
the preservation of Lipschitz-continuity as well as of convexity are discussed.
\end{abstract}

\bigskip\noindent\textbf{2010 Mathematics Subject Classification:} {41A36, 41A25, 41A63. }

\medskip\noindent\textbf{Keywords and phrases:} {Markov operator. Positive approximation process. Kantorovich
operator. Preservation property.}

\section*{Introduction}

Over the last twenty years the interest and the relevance of the study of positive approximation processes on convex compact subsets have emerged with growing evidence, mainly
because of their useful connections with approximation problems both for functions defined on these domains and for the solutions of initial-boundary value differential problems.
In such a setting a prominent role is played by Bernstein-Schnabl operators which are generated by Markov operators.

In the monograph \cite{ACMLR2014} (see also \cite{ac}) a rather complete overview can be found about the main results in this field of researches together with their main applications.

In this paper we introduce and study a new sequence of positive linear operators acting on function spaces defined on a convex compact subset $K$ of some locally convex Hausdorff space. Their construction depends on a given Markov operator $T:\mathscr{C}(K) \rightarrow \mathscr{C}(K)$, a real number $a \geq 0$ and a sequence $(\mu_n)_{n \geq 1}$ of probability Borel measures on $K$.

By considering special cases of these parameters for particular convex compact subsets such as the unit interval or the multidimensional hypercube and simplex, we obtain all the Kantorovich operators defined on these settings together with several of other wide-ranging generalizations (\cite{ACML2010}, \cite{AL2006}, \cite{CV}, \cite{GRR}, \cite{Kantor}, \cite{N}, \cite{Stancu}, \cite{zhou}).

Moreover, for $a=0$, the new operators turn into the Bernstein-Schnabl operators and so, by means of the real continuous parameter $a \geq 0$, our sequence of operators represents, indeed, a link between the Bernstein operators ($a=0$) and the Kantorovich operators ($a=1$) on the classical one-dimensional and multidimensional domains where they are defined.

The paper is mainly devoted to investigate the approximation properties of the above mentioned operators in spaces of continuous functions and, for special settings, in $\mathscr{L}^p$-spaces
as well. Several estimates of the rate of convergence are also provided. In the final section we discuss some conditions under which these operators preserve Lipschitz-continuity or the convexity.

In a subsequent paper we intend to investigate whether and for which class of initial-boudary value differential problems, our operators, like Bern\-stein-Schnabl operators, can be useful to approximate the relevant solutions.

\section{Notation and preliminaries}

Throughout the paper we shall fix a locally convex Hausdorff space $X$ and a convex compact subset $K$ of $X$.

The symbol $X^{\prime}$ will denote   the dual space
of $X$ and the symbol $L(K)$ will stand for the space
\begin{equation}\label{lin} L(K):=\{\varphi_{|K}\,  \mid  \, \varphi \in X^{\prime}
\}.
\end{equation}

As usual, we shall denote by $\mathscr{C}(K)$ the space of all real-valued continuous functions on $K$;  $\mathscr{C}(K)$ is a Banach lattice if
endowed with the natural
(pointwise) ordering and the sup-norm $\|\cdot\|_{\infty}$.

Moreover, we shall denote by  $A(K)$  the space of all  continuous affine   functions
on $K$.

Whenever $X$ is the real Euclidean space $\mathbf{R}^d$  of dimension $d$ ($d\geq 1$), we shall denote by $\|\cdot\|_2$ the Euclidean norm on $\mathbf{R}^d$. Furthermore, we shall denote by $\lambda_d$ the Borel-Lebesgue measure on  $K\subset \mathbf{R}^d$  and by $|K|$ the measure of $K$ with respect to $\lambda_d$. Finally, for every $i=1,\dots,d$, $pr_i$ will
stand for the $i^{th}$ coordinate function on $K$, i.e.,
$pr_i(x):=x_i$ for every $x=(x_1,\dots,x_d)\in K$.


 Coming back to an arbitrary convex compact subset $K$, let $B_K$ be the $\sigma$-algebra of all Borel
subsets of $K$ and $M^+(K)$ (resp., $M^+_1(K)$) the cone of all
regular Borel measures on $K$ (resp., the cone of all regular Borel
probability measures on $K$).  For every $x \in K$ the symbol $\epsilon_x$ stands
 for the Dirac measure concentrated at $x$.

If $\mu \in M^+(K)$  and $1\leq p< +\infty$,
let us denote by $\mathscr{L}^p(K,\mu)$ the space of all
$\mu$-integrable in the $p^{th}$ power functions on $K$; moreover, we shall
denote by $\mathscr{L}^{\infty}(K,\mu)$ the space of all
$\mu$-essentially bounded measurable functions on $K$. In
particular, if $\mu=\lambda_d$,  then we shall use the symbols
$\mathscr{L}^p(K)$ and $\mathscr{L}^{\infty}(K)$.

From now on let $T: \mathscr{C}(K)\to \mathscr{C}(K)$ be a Markov operator, i.e., a positive linear operator on $\mathscr{C}(K)$ such that $T(\textbf{1})=\textbf{1}$, where the symbol $\textbf{1}$ stands for the function of constant value $1$ on $K$.
Moreover, let  $(\tilde{\mu}_x^T)_{x\in K}$ be the continuous selection of probability Borel measures on $K$
 corresponding to $T$ via the Riesz representation theorem, 
 i.e.,
\begin{equation}\label{borelmeas}
 \int_Kf\,d\tilde{\mu}_x^T=T(f)(x)\qquad (f\in  \mathscr{C}(K), x\in K).
\end{equation}

In \cite[Chapter 3]{ACMLR2014} (see also \cite[Chapter 6]{ac}) the authors introduced and studied the so-called Bernstein-Schnabl operators associated with the Markov operator $T$ and defined, for every $f\in \mathscr{C}(K)$ and $x\in K$,  as follows
\begin{equation}\label{bs}
B_n(f)(x)= \int_K\!\!\! \cdots \!\!\int_K f\left(\frac{x_1+ \ldots
+x_n}{n}\right) d\tilde{\mu}_x^T(x_1) \cdots d\tilde{\mu}_x^T(x_n).
\end{equation}
Note that, for every $n\geq 1$,  $B_n$ is a linear positive operator from $\mathscr{C}(K)$ into $\mathscr{C}(K)$, $B_n(\textbf{1})=\textbf{1}$ and hence $\|B_n\|=1$. Moreover, $B_1=T$.

The operators $B_n$ generalize the classical Bernstein operators on the unit interval, on multidimensional simplices and  hypercubes, and they share with them several preservation properties which are investigated in \cite{ACMLR2014} and \cite{ac}.

Further, if in addition we suppose that the Markov operator $T$ satisfies the following condition
\begin{equation}\label{ass1}
T(h)=h \qquad \textrm{ for every } h \in A(K),
\end{equation}
or, equivalently (see \eqref{borelmeas}),
\begin{equation}\label{ass1a}
\int_K h \, d\tilde{\mu}_x^T =h(x) \qquad \textrm{ for every } h \in
A(K) \textrm{ and } x \in K,
\end{equation}
then the sequence $(B_n)_{n\geq 1}$ is an approximation process on $\mathscr{C}(K)$. Namely,
for every $f \in \mathscr{C}(K)$,
\begin{equation}\label{approxim} \lim\limits_{n
\to \infty} B_n(f)=f
\end{equation}
uniformly on $K$.

Finally, for every $h, k \in A(K)$ and $n \geq
1$, the following useful formulae hold:
\begin{equation}\label{affini}
B_n(h)=h
\end{equation}
and
\begin{equation}\label{prop2}
B_n(hk)=\frac{1}{n}T(h k)+\frac{n-1}{n}hk.
\end{equation}
In particular,
\begin{equation}\label{affinisquares}
B_n(h^2)=\frac{1}{n}T(h^2)+ \frac{n-1}{n}h^2.
\end{equation}
(For a proof of \eqref{approxim}-\eqref{affinisquares} see  \cite[Theorem 3.2.1]{ACMLR2014} or \cite[Theorem 3.2]{ACMLRJFA2014}).

\section{Generalized Kantorovich operators}

In this section we introduce the main object of interest of the paper and we show some examples.

Let  $T: \mathscr{C}(K)\to \mathscr{C}(K)$ be a Markov operator  satisfying  condition  \eqref{ass1} (or, equivalently \eqref{ass1a}). 
Moreover, fix $a\geq 0$ and a sequence $(\mu_n)_{n\geq 1}$ of probability Borel measures on $K$.

Then, for every $n\geq 1$, we consider the positive linear operator $C_n$ defined by setting
\begin{equation}
\label{CnExprK}
    C_n(f)(x)\!=\!\!\int_K \!\!\!\cdots
    \!\!\int_K\! f\!\left(\!\frac{x_1+\ldots+x_n+ax_{n+1}}{n+a}\!\right)\!\!\,d\tilde{\mu}^T_x(x_1)\cdots
    d\tilde{\mu}^T_x(x_{n})d\mu_n(x_{n+1})
\end{equation}
for every $x\in K$ and for every $f\in\mathscr{C}(K)$.

The germ of the idea of the construction of \eqref{CnExprK} goes back to  \cite{AL2006} where some of the authors considered the particular case of the unit interval (see also \cite{AL}). Subsequently, in \cite{ACML2010} a natural generalization of $C_n$'s to the
multidimensional setting, i.e., to hypercubes and simplices, was presented obtaining, as a particular case, the multidimensional Kantorovich operators on these frameworks.

Here we develop such idea in full generality, obtaining a new class of positive linear operators which encompasses  not only several well-known approximation processes both in univariate and multivariate settings, but also new ones in finite and infinite dimensional frameworks as well.

Clearly, in the special case $a=0$, the operators $C_n$ correspond to the $B_n$ ones. Moreover, introducing
the auxiliary continuous function 
\begin{equation}\label{fn}
I_n(f)(x):=\int_K f\left(\frac{n}{n+a}\,x+\frac{a}{n+a}\,t\right) \,
d\mu_n(t) \qquad (f\in\mathscr{C}(K),\,x \in K),
\end{equation}
for every $n\geq 1$, then
\begin{equation}\label{newexp}
C_n(f)=B_n(I_n(f)).
\end{equation}

Therefore $C_n(f) \in \mathscr{C}(K)$ and the operator
$C_n:\mathscr{C}(K) \rightarrow \mathscr{C}(K)$, being linear
and positive, is continuous with norm equal to 1, because
$C_n(\mathbf{1})=\mathbf{1}$.

We point out that the operators $C_n$ are well-defined on the larger linear space of all Borel measurable functions $f:K\to\mathbf{R}$ for which the multiple integral in \eqref{CnExprK} is absolutely convergent.  This space contains, among other things, all the bounded Borel measurable functions on $K$ as well as a suitable subspace of $\bigcap\limits_{n \geq 1}\mathscr{L}^1(K,\mu_n)$.

Here, we prefer to not enter into more details to this respect by postponing a more detailed analysis in a subsequent paper. However, in Section 4 we shall discuss the approximation properties of these operators also in the setting of $\mathscr{L}^p(K)$-spaces in the particular cases where $K$ is a simplex or a hypercube of $\mathbf{R}^d$.

Note that assumption \eqref{ass1} is not essential in defining the operators $C_n$, but it will be needed in order to prove that $(C_n)_{n	\geq 1}$ is an approximation process on $\mathscr{C}(K)$ and on $\mathscr{L}^p(K)$   as we shall see in the next sections.  

By specifying the Markov operator $T$, i.e., the family of representing measures $(\tilde{\mu}_x^T)_{x\in K}$, the parameter $a\geq 0$  as well as the sequence of measures $(\mu_n)_{n\geq 1}$, we obtain several classes of approximating operators which can be tracked down in different papers.

In particular, when $a=1$, for a special class of $T$ and of the sequence $(\mu_n)_{n\geq 1}$ we get the Kantorovich operators on the unit interval, on simplices and on hypercubes (see the next examples).

Another interesting case is covered when $a$ is a positive integer. Indeed, given $\mu\in M_1^+(K)$, we may consider the measure $\mu_a\in M_1^+(K)$ defined by
\begin{equation*}
\int_K f\,d\mu_a:=\int_K\!\!\!\cdots\!\int_K f\left(\frac{y_1+\ldots+y_a}{a}\right)d\mu(y_1)\cdots d\mu(y_a)
\end{equation*}
($f\in\mathscr{C}(K)$) and hence, for $\mu_n:=\mu_a$ for every $n\geq 1$, the corresponding operators in \eqref{CnExprK} reduce to
\begin{equation}
\label{CnExprKbis}\begin{split}
    &C_n(f)(x)=\\ &\int_K \!\!\!\cdots
    \!\int_Kf\left(\frac{x_1+\ldots+x_n+y_1+\ldots+y_a}{n+a}\right)
    d\tilde{\mu}^T_x(x_1)\cdots
    d\tilde{\mu}^T_x(x_{n})d\mu(y_1)\cdots d\mu(y_a)
\end{split}
\end{equation}
($n \geq 1$, $f\in\mathscr{C}(K)$ $x \in K)$.

When $K=[0,1]$, for particular $T$ and $\mu$ we get the so-called Kantorovich operators of order $a$ (see \cite[Examples (A)]{CV} and \cite{N} for $a=2$). 

Further, we mention another particular simple case which, however, seems to be not devoid of interest. Assume $a>0$ and consider a sequence $(b_n)_{n\geq 1}$ in $X$ such that $b_n/a\in K$ for every $n\geq 1$. Then, setting $\mu_n:=\epsilon_{b_n/a}$ ($n\geq 1$), from \eqref{CnExprK} we get
\begin{equation}
\label{CnExprKter}
    C_n(f)(x)=\int_K \!\!\!\cdots
    \!\!\int_Kf\!\left(\!\frac{x_1+\ldots+x_n+b_n}{n+a}\!\right)\,d\tilde{\mu}^T_x(x_1)\cdots
    d\tilde{\mu}^T_x(x_{n})
    \end{equation}
($n\geq 1$, $f\in\mathscr{C}(K)$, $x\in K$).

We proceed to show more specific examples.

\begin{examples}\label{esCn}
\end{examples}
\noindent  $1.$   \indent Assume $K=[0,1]$ and consider the Markov operator  $T_1: \mathscr{C}([0,1])\to  \mathscr{C}([0,1])$ defined, for every $f\in  \mathscr{C}([0, 1])$ and $0\leq x\leq 1$, by
\begin{equation}\label{defT1}
T_1(f)(x):=(1-x)f(0)+xf(1).
\end{equation}

Then, the Bernstein-Schnabl operators associated with $T_1$ are the classical Bernstein operators
\begin{equation}
\label{new2.7}
B_n(f)(x):=\sum_{k=0}^n \binom{n}{k}x^k(1-x)^{n-k}f\left(\frac{k}{n}\right)
\end{equation}
($n\geq 1$, $f\in\mathscr{C}([0,1])$, $x\in [0,1]$), and, considering $a\geq 0$ and $(\mu_n)_{n\geq 1}$ in $M_1^+([0,1])$, from \eqref{CnExprK} and \eqref{newexp} we get
\begin{equation}
\label{new2.8}
C_n(f)(x)=\sum_{k=0}^n\binom{n}{k} x^k(1-x)^{n-k}\int_0^1f\left(\frac{k+as}{n+a}\right)d\mu_n(s)
\end{equation}
($n\geq 1$, $f\in\mathscr{C}([0,1])$, $x\in [0,1]$). In particular, if all the $\mu_n$ are equal to the Borel-Lebesgue measure $\lambda_1$ on $[0,1]$ and $a=1$, then formula \eqref{new2.8} gives the classical Kantorovich operators  (\cite{Kantor}; see also \cite[Subsection 5.3.7]{ac}). Moreover, as already remarked, for $a=0$ we obtain the Bernstein operators; thus, by means of \eqref{new2.8}, we obtain a link between these fundamental sequences of approximating operators in terms of a continuous parameter $a\in [0,1]$. Special cases of operators \eqref{new2.8} have been also considered in \cite{AL2006} and \cite{GRR} and we omit the details for the sake of brevity.

When $a$ is a positive integer, from \eqref{CnExprKbis} we obtain 
\begin{equation}\begin{split}
\label{new2.9}
&C_n(f)(x)\\
&=\sum_{k=0}^n\binom{n}{k} x^k(1-x)^{n-k}\int_0^1\!\!\!\cdots\!\!\int_0^1 f\left(\frac{k+y_1+\cdots+y_a}{n+a}\right)d\mu(y_1)\cdots d\mu(y_a)\end{split}
\end{equation}
($n\geq 1$, $f\in\mathscr{C}([0,1])$, $0\leq x\leq 1$), $\mu\in M_1^+([0,1])$ being fixed. When $\mu$ is the Borel-Lebesgue measure on $[0,1]$ we obtain the previously mentioned Kantorovich operators of order $a$ (\cite[Examples (A)]{CV} and \cite{N} for $a=2$). 
Finally, from \eqref{CnExprKter} and with $b_n\leq a$ ($n\geq 1$), we get the operators
\begin{equation}
\label{new2.10}
C_n(f)(x)=\sum_{k=0}^n\binom{n}{k} x^k(1-x)^{n-k} f\left(\frac{k+b_n}{n+a}\right)
\end{equation}
($n\geq 1$, $f\in\mathscr{C}([0,1])$, $x\in [0,1]$), which have been first considered in \cite{Stancu} for a constant sequence $(b_n)_{n\geq 1}$. \\ \\
$2.$  \indent Let $Q_d:=[0,1]^d$, $d\geq 1$,  and consider the Markov operator $S_d:\mathscr{C}(Q_d) \rightarrow \mathscr{C}(Q_d)$ defined by
\begin{equation}\label{projh1}
S_d(f)(x):=\sum_{h_1, \ldots, h_d=0}^1 f( \delta_{h_1 1}, \ldots,
\delta_{h_d 1}) x_1^{h_1}(1-x_1)^{1-h_1} \cdots
x_d^{h_d}(1-x_d)^{1-h_d}
\end{equation}
($f \in \mathscr{C}(Q_d)$, $x=(x_1, \ldots, x_d) \in Q_d)$, where
$\delta_{i j}$ stands for the Kronecker symbol.

In this case, the Bernstein-Schnabl operators associated with $S_d$ are the classical Bernstein operators on $Q_d$ defined by
 \begin{equation}\label{bernhy}
  \begin{split}
    &B_n(f)(x)=\sum\limits_{h_1,\dots,h_d=0}^n
  \,\prod_{i=1}^d {\binom{n}{h_i}} x_i^{h_i}(1-x_i)^{n-h_i} f\left(\dfrac{h_1}{n}, \ldots, \dfrac{h_d}{n}\right)
   \end{split}
 \end{equation}
($n\geq 1$, $f\in \mathscr{C}(Q_d)$, $x=(x_1,\ldots,x_d)\in Q_d)$.

Then, taking \eqref{newexp}  into account, the operators $C_n$ given by \eqref{CnExprK} become
\begin{equation}\label{new2.12}
\begin{split}
&C_n(f)(x)=\sum\limits_{h_1,\dots,h_d=0}^n
  \,\prod_{i=1}^d {\binom{n}{h_i}} x_i^{h_i}(1-x_i)^{n-h_i} \\& \times \int_{Q_d} f\left(\dfrac{h_1+as_1}{n+a}, \ldots, \dfrac{h_d+as_d}{n+a}\right) \, d\mu_n(s_1, \ldots, s_d)
  \end{split}
  \end{equation}
($n\geq 1$, $f\in \mathscr{C}(Q_d)$, $x=(x_1,\ldots,x_d)\in Q_d)$.

 When all the $\mu_n$ coincide with the Borel-Lebesgue measure $\lambda_d$ on $Q_d$ and $a=1$, the operators $C_n$ turn into a generalization of Kantorovich operators introduced in 
\cite{zhou}. Another special case of \eqref{new2.12} has been studied in \cite{ACML2010}.\\ \\
$3.$  \indent 
 Denote by $K_d$ the canonical simplex in
$\mathbf{R}^d$, $d\geq 1$, i.e.,
\begin{equation}\label{simplexd}
K_d:=\left\{(x_1, \ldots , x_d) \in \mathbf{R}^d \, | \, x_i \geq
 0 \,(i=1, \ldots, d) \, \textrm{and} \, \sum\limits_{i=1}^d x_i \leq 1\right\},
\end{equation}
 and consider the canonical Markov operator  $T_d :\mathscr{C}(K_d) \rightarrow \mathscr{C}(K_d)$  defined by
\begin{equation}\label{pros}
T_d(f)(x):=\left(1-\sum\limits_{i=1}^d
x_i\right)f(0)+\sum\limits_{i=1}^d x_i f(e_i)
\end{equation}
($f \in  \mathscr{C}(K_d)$, $ x=(x_1, \ldots, x_d) \in K_d$)
where, for every $i=1, \ldots,d,$ $e_i:=(\delta_{i j})_{1 \leq j
\leq d}$,  $\delta_{i j}$ being the Kronecker symbol (see, for instance, \cite[Section 6.3.3]{ac}).

The Bernstein-Schnabl operators associated with $T_d$ are the classical Bernstein operators on $K_d$ defined by
\begin{equation}
\label{bernsimplex}
\begin{split}
 &B_n(f)(x)=
 \sum\limits_{h_1, \ldots, h_d =0, \ldots, n \atop
h_1+\ldots+h_d \leq n} \frac{n!}{h_1!\cdots h_d!(n-h_1- \ldots-h_d)!} x_1^{h_1}\cdots x_d^{h_d} \\& \times \left(1- \sum\limits_{i=1}^d
x_i\right)^{n-\sum\limits_{i=1}^d h_i} f\left(\dfrac{h_1}{n}, \ldots, \dfrac{h_d}{n}\right)
\end{split}
\end{equation}
($n\geq 1$, $f\in \mathscr{C}(K_d)$, $x=(x_1,\ldots,x_d)\in K_d$).

By using again \eqref{newexp}, we obtain
\begin{equation}
\label{new2.15}
\begin{split}
 &C_n(f)(x)=\\
 &\sum\limits_{h_1, \ldots, h_d =0, \ldots, n \atop
h_1+\ldots+h_d \leq n} \frac{n!}{h_1!\cdots h_d!(n-h_1- \ldots-h_d)!} x_1^{h_1}\cdots x_d^{h_d}\left(1- \sum\limits_{i=1}^d
x_i\right)^{n-\sum\limits_{i=1}^d h_i}
\\&   \times    \int_{K_d} f\left(\frac{h_1+as_1}{n+a}, \frac{h_2+as_2}{n+a},\ldots,\frac{h_d+as_d}{n+a}\right)\,d\mu_n(
    s_1,\ldots,s_d)
\end{split}
\end{equation}
($n\geq 1$, $f\in \mathscr{C}(K_d)$, $x=(x_1,\ldots,x_d)\in K_d$).

When all the $\mu_n$ are equal to the Borel-Lebesgue measure $\lambda_d$ on $K_d$ and $a=1$, these operators are referred to as the Kantorovich operators on $ \mathscr{C}(K_d)$ and were introduced in \cite{zhou}. Another particular case of \eqref{new2.15} has been investigated in  \cite[Section 3]{ACML2010}. 

For the sake of brevity we omit to describe in the setting of $ \mathscr{C}(Q_d)$ and $ \mathscr{C}(K_d)$ the operators corresponding to \eqref{CnExprKbis} and \eqref{CnExprKter}.

\section{Approximation properties in $\mathscr{C}(K)$}

 In this section we present some approximation properties
of the sequence $(C_n)_{n\geq 1}$ on $\mathscr{C}(K)$, showing
several estimates of the rate of convergence.

In order to prove that the sequence $(C_n)_{n\geq 1}$ is a
(positive) approximation process on $\mathscr{C}(K)$ we need the
following preliminary result.

\begin{lemma}\label{lemma1}
Let $(C_n)_{n\geq 1}$ be the sequence of operators defined by
(\ref{CnExprK}) and associated with a Markov operator satisfying \eqref{ass1} (or, equivalently \eqref{ass1a}). Then, for every $h,k\in A(K)$,
\begin{equation}\label{Cn(h)}
    C_n(h)=\dfrac{a}{n+a}\int_K h\,d
    \mu_n\cdot \mathbf{1}+\dfrac{n}{n+a}h
\end{equation}
and
\begin{equation}\label{Cn(hk)}\begin{split}
   &C_n(hk)=\dfrac{a^2}{(n+a)^2}\int_K hk\,d\mu_n\cdot \mathbf{1}+\dfrac{na}{(n+a)^2}\int_K h\,d\mu_n\cdot
    k\\&+\dfrac{na}{(n+a)^2}\int_K k\,d\mu_n\cdot
    h+\dfrac{n^2}{(n+a)^2}B_n(hk).
  \end{split}  \end{equation}
In particular,
\begin{equation}\label{Cn(h^2)}
    C_n(h^2)=\dfrac{a^2}{(n+a)^2}\int_K h^2\,d\mu_n\cdot \mathbf{1}+\dfrac{2na}{(n+a)^2}\int_K h\,d\mu_n\cdot
    h+\dfrac{n^2}{(n+a)^2}B_n(h^2).
\end{equation}

\end{lemma}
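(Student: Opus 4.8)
The plan is to exploit the factorization \eqref{newexp}, namely $C_n(f)=B_n(I_n(f))$, together with the fact that affine functions pass through convex combinations linearly. The only additional input is the known behaviour of the Bernstein--Schnabl operators $B_n$ on affine functions and on products of affine functions, recorded in \eqref{affini} and \eqref{prop2}.

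First I would evaluate $I_n(h)$ for a single $h\in A(K)$. Since $\tfrac{n}{n+a}+\tfrac{a}{n+a}=1$ and $K$ is convex, the argument $\tfrac{n}{n+a}x+\tfrac{a}{n+a}t$ lies in $K$, and affineness of $h$ gives
$$h\!\left(\frac{n}{n+a}x+\frac{a}{n+a}t\right)=\frac{n}{n+a}h(x)+\frac{a}{n+a}h(t).$$
Integrating against $\mu_n$ in the variable $t$ yields
$$I_n(h)=\frac{n}{n+a}h+\frac{a}{n+a}\left(\int_K h\,d\mu_n\right)\mathbf{1}.$$
Applying $B_n$ and using $B_n(h)=h$ (by \eqref{affini}) and $B_n(\mathbf{1})=\mathbf{1}$ then produces \eqref{Cn(h)} directly by linearity.

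For the product formula I would repeat the same idea on $hk$. Applying affineness to each factor and expanding,
$$(hk)\!\left(\frac{n}{n+a}x+\frac{a}{n+a}t\right)=\left(\frac{n}{n+a}h(x)+\frac{a}{n+a}h(t)\right)\!\left(\frac{n}{n+a}k(x)+\frac{a}{n+a}k(t)\right),$$
which splits into four monomials in the coefficients $\tfrac{n}{n+a}$ and $\tfrac{a}{n+a}$. Integrating in $t$ against $\mu_n$ replaces each factor $h(t)$ by $\int_K h\,d\mu_n$, each $k(t)$ by $\int_K k\,d\mu_n$, and the pure-$t$ product by $\int_K hk\,d\mu_n$, so that
$$I_n(hk)=\frac{n^2}{(n+a)^2}hk+\frac{na}{(n+a)^2}\left(\int_K k\,d\mu_n\right)h+\frac{na}{(n+a)^2}\left(\int_K h\,d\mu_n\right)k+\frac{a^2}{(n+a)^2}\left(\int_K hk\,d\mu_n\right)\mathbf{1}.$$
Now I would apply $B_n$ term by term: the first summand contributes $\tfrac{n^2}{(n+a)^2}B_n(hk)$, which one may leave as is or expand via \eqref{prop2}; the two middle summands are scalar multiples of the affine functions $h$ and $k$ and are therefore fixed by $B_n$ by \eqref{affini}; the last summand is a scalar multiple of $\mathbf{1}$. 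Collecting the four pieces gives exactly \eqref{Cn(hk)}, and specializing $k=h$ yields \eqref{Cn(h^2)}.

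The computation presents no genuine obstacle; the one point requiring care is the bookkeeping in the product expansion, specifically recognizing that the mixed terms $\left(\int_K k\,d\mu_n\right)h$ and $\left(\int_K h\,d\mu_n\right)k$ are scalar multiples of affine functions (the integrals being constants), so that $B_n$ acts on them as the identity via \eqref{affini} rather than through the product rule \eqref{prop2}, which is reserved for the genuine product $hk$.
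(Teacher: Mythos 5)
Your proof is correct and follows essentially the same route as the paper: compute $I_n(h)$ and $I_n(hk)$ by affineness, then apply $B_n$ through the factorization \eqref{newexp} using \eqref{affini}. Your observation that the last term can be left as $\tfrac{n^2}{(n+a)^2}B_n(hk)$ without invoking \eqref{prop2} is accurate, since the stated formula \eqref{Cn(hk)} keeps $B_n(hk)$ unexpanded.
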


\begin{proof}
By \eqref{fn}, if $h \in A(K)$, then
$\displaystyle{I_n(h)=\frac{n}{n+a}h+\frac{a}{n+a} \int_K h \
d\mu_n}\cdot\mathbf{1}$ and hence \eqref{Cn(h)} follows taking \eqref{newexp} 
and \eqref{affini}  into account.

Analogously, \eqref{Cn(hk)}  is a consequence of the identity
\begin{equation*}\begin{split}
&I_n(hk)=\frac{n^2}{(n+a)^2}hk + \frac{a^2}{(n+a)^2} \int_K hk \
d\mu_n\cdot\mathbf{1}\\
& + \frac{na}{(n+a)^2} \left(\int_K h \ d\mu_n\right) k+\frac{na}{(n+a)^2} \left(\int_K k \ d\mu_n\right) h
\end{split}
\end{equation*}
and \eqref{prop2}.
Formula  \eqref{Cn(h^2)} is a direct consequence of    \eqref{Cn(hk)}.
\end{proof}

The following approximation result holds.

\begin{theorem}\label{ApproxC(K)}
Under assumption \eqref{ass1}, for every $f\in \mathscr{C}(K)$
\begin{equation}\label{approxCnC(K)}
    \lim_{n\rightarrow \infty} C_n(f)=f\quad \textit{uniformly on }\,\,K.
\end{equation}
\end{theorem}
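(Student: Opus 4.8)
The plan is to invoke the classical Korovkin-type theorem for positive linear operators: to prove uniform convergence $C_n(f)\to f$ for all $f\in\mathscr{C}(K)$, it suffices to verify convergence on a Korovkin test set. Since $K$ is a convex compact subset of a locally convex Hausdorff space, the appropriate test set consists of the constant function $\mathbf{1}$, the restrictions $h\in A(K)$ of continuous affine functions, and their squares $h^2$; more precisely, by the Korovkin theorem in this setting (as developed in \cite{ACMLR2014}), it is enough to show that $C_n(\mathbf{1})\to\mathbf{1}$, $C_n(h)\to h$ and $C_n(h^2)\to h^2$ uniformly for each $h\in L(K)$ (or for a family separating the points of $K$). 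First I would record that $C_n(\mathbf{1})=\mathbf{1}$ trivially, since each $C_n$ is a Markov operator.

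The core of the argument is a direct estimate using Lemma~\ref{lemma1}. From \eqref{Cn(h)} we have
\begin{equation*}
C_n(h)-h=\frac{a}{n+a}\Bigl(\int_K h\,d\mu_n\cdot\mathbf{1}-h\Bigr),
\end{equation*}
and since $h$ is bounded on the compact set $K$, the quantity $\int_K h\,d\mu_n$ is bounded uniformly in $n$ (by $\|h\|_\infty$), so the prefactor $\frac{a}{n+a}\to 0$ forces $\|C_n(h)-h\|_\infty\to 0$. For the squares I would start from \eqref{Cn(h^2)} and combine it with the Bernstein-Schnabl convergence \eqref{approxim}, which gives $B_n(h^2)\to h^2$ uniformly. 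Writing
\begin{equation*}
C_n(h^2)-h^2=\frac{a^2}{(n+a)^2}\int_K h^2\,d\mu_n\cdot\mathbf{1}+\frac{2na}{(n+a)^2}\int_K h\,d\mu_n\cdot h+\frac{n^2}{(n+a)^2}B_n(h^2)-h^2,
\end{equation*}
I would observe that the first two terms carry coefficients $O(1/n^2)$ and $O(1/n)$ multiplied by quantities bounded uniformly in $n$, hence vanish uniformly, while the last block $\frac{n^2}{(n+a)^2}B_n(h^2)-h^2$ tends to $h^2-h^2=0$ uniformly because $\frac{n^2}{(n+a)^2}\to 1$ and $B_n(h^2)\to h^2$ uniformly with $\|B_n(h^2)\|_\infty\le\|h^2\|_\infty$.

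Having established the three Korovkin conditions, the conclusion \eqref{approxCnC(K)} follows by applying the Korovkin theorem for this class of test functions on convex compact subsets. The main subtlety, rather than any computational obstacle, is the choice of the correct Korovkin system: on an abstract convex compact $K$ one cannot use monomials, and one must instead rely on the family $A(K)$ together with products, exactly as encoded in \eqref{affini}--\eqref{affinisquares} for the Bernstein-Schnabl operators. This is precisely why assumption \eqref{ass1} is indispensable here (as the authors already flagged in the remark preceding the examples): it is what makes $A(K)$ a set of ``almost-fixed points'' via Lemma~\ref{lemma1}, and without it neither \eqref{Cn(h)} nor the reduction through $B_n$ would hold. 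All the heavy lifting of the Korovkin machinery on convex compacta is already available from \cite[Theorem 3.2.1]{ACMLR2014}, so the proof reduces to the elementary limit estimates sketched above.
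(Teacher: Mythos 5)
Your proposal is correct and follows essentially the same route as the paper: both reduce the claim to the Korovkin test set $A(K)\cup A(K)^2$ (using that $A(K)$ contains the constants and separates points), and then verify $C_n(h)\to h$ and $C_n(h^2)\to h^2$ uniformly via Lemma~\ref{lemma1}, the boundedness of $\int_K h\,d\mu_n$ and $\int_K h^2\,d\mu_n$, and the convergence \eqref{approxim} of the Bernstein--Schnabl operators. Your explicit decomposition of $C_n(h^2)-h^2$ merely spells out the ``observing that the sequences are bounded'' step that the paper leaves implicit.
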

\begin{proof}
First observe that the space $A(K)$ contains the constant functions and separates the points of $K$ by the Hahn-Banach theorem. Then,
according to \cite[Example 3 of Theorem 4.4.6]{ac} (see also \cite[Theorem 1.2.8]{ACMLR2014}),  $A(K)\cup A(K)^2$ is a Korovkin subset for $\mathscr{C}(K)$. Hence the
claim will be proved if we show that, for every $h\in A(K)$, 
\begin{equation*}
\lim\limits_{n \rightarrow \infty} C_n(h)=h \ \quad \ \textrm{and}
\ \quad \ \lim\limits_{n \rightarrow \infty} C_n(h^2)=h^2
\end{equation*}
uniformly on $K$. All these assertions
follow from   Lemma \ref{lemma1}, observing that the sequences
$\left( \int_K h \ d\mu_n\right)_{n \geq 1}$ and $\left( \int_K h^2
\ d\mu_n\right)_{n \geq 1}$ are bounded for every $h \in A(K)$.
\end{proof}

Now we present some quantitative estimates of the rate of convergence in \eqref{approxCnC(K)} by means of suitable moduli of continuity both in the finite and infinite dimensional settings.

To this end we need to recall some useful definitions.

We begin with the finite dimensional case, i.e.,
  $K$ is a convex compact subset of $\mathbf{R}^d$, $d\geq 1$.  Then
we can estimate the rate of uniform convergence of the
sequence $(C_n(f))_{n \geq 1}$ to $f$   by means of the first and the second
moduli of continuity,
 respectively defined as
\begin{equation}\label{om1}
    \omega (f, \delta):=\sup \{|f(x)-f(y)| \; | \; x,y \in K , \|x-y\|_2 \leq \delta \}
\end{equation}
and
\begin{equation}\label{om2}
    \omega_2(f, \delta):=\sup \left\{\left|f(x)-2f\left(\frac{x+y}{2}\right)+f(y)\right| \; | \; x,y \in K , \|x-y\|_2 \leq 2\delta \right\},
\end{equation}
for any $f \in \mathscr{C}(K)$ and $\delta >0$.

In the general case of a locally convex Hausdorf space $X$ (not necessarily of finite dimension) we shall use the total modulus of continuity which we are going to define.

First, if $m \geq 1$, $h_1, \ldots, h_m \in L(K)$ (see \eqref{lin}) and
$\delta >0$, we set
\begin{equation}\label{khm}
H(h_1, \ldots, h_m, \delta):=\left\{(x,y) \in K \times K \, \mid \,
\sum_{j=1}^m (h_j(x)-h_j(y))^2 \leq \delta^2\right\}.
\end{equation}

Fix a bounded function $f :K\to \mathbf{R}$; the  modulus of continuity of $f$
with respect to $h_1, \ldots, h_m$ is defined as
\begin{equation}\label{omegahm}
\omega(f; h_1, \ldots, h_m, \delta):= \sup\{|f(x)-f(y)| \ \mid \
(x,y) \in H(h_1, \ldots, h_m, \delta)\}.
\end{equation}

Moreover, we define the  total modulus of continuity of  $f$ as
\begin{equation}\label{omegaf}
\begin{split}
&\Omega(f, \delta):=\\
&=\inf \Big\{\omega(f; h_1, \ldots, h_m, \delta) \
\mid \ m \geq 1, \ h_1, \ldots, h_m \in L(K),
\left|\left|\sum_{j=1}^m h_j^2\right|\right|_\infty =1\Big\}\\&= \inf\Big\{\omega(f; h_1, \ldots, h_m, 1) \
\mid \ m \geq 1, \ h_1, \ldots, h_m \in L(K),
\left|\left|\sum_{j=1}^m h_j^2\right|\right|_\infty =\frac{1}{\delta^2}\Big\}.
\end{split}
\end{equation}

If $X=\mathbf{R}^d$  there is a simple relationship
between $\Omega(f, \delta)$ and the (first) modulus of continuity
$\omega(f, \delta)$ defined by \eqref{om1}; indeed,
\begin{equation}\label{omegapr}
\omega(f; pr_1, \ldots, pr_d, \delta)=\omega(f, \delta),
\end{equation}
so that, setting $r(K):=\max\{\|x\|_2 \ \mid \ x \in K\}$,
\begin{equation}\label{relomega}
\Omega(f, \delta) \leq \omega(f, \delta r(K)),
\end{equation}
the last inequality being an equality if $d=1$.

By using Proposition 1.6.5 in \cite{ACMLR2014} (see also \cite[Proposition 5.1.4]{ac})  we get the following result.

\begin{proposition}\label{prop3.3}
For every $n\geq 1$ and $f\in  \mathscr{C}(K)$,
\begin{equation}
\label{stimaOmegaCn}
\|C_n(f)-f\|_{\infty}\leq 2\Omega\left(f,\sqrt{\frac{4a^2+1}{n+a}}\right).
\end{equation}
\end{proposition}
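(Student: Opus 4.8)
The plan is to invoke the quantitative Korovkin-type estimate recorded as Proposition~1.6.5 in \cite{ACMLR2014} (equivalently \cite[Proposition~5.1.4]{ac}), which bounds $\|L(f)-f\|_\infty$ for a Markov operator $L$ in terms of the total modulus of continuity and the second-order ``moment'' quantity $\sup_{x\in K}\int_K \sum_{j=1}^m (h_j-h_j(x))^2\,d\tilde\mu_x^L$, or the analogous expression $L\big(\sum_j(h_j-h_j(x))^2\big)(x)$ evaluated via the representing measures. Since $C_n$ is a Markov operator, the key point will be to control, uniformly in $x$, the quantity $C_n\big(\sum_{j=1}^m (h_j-h_j(x)\mathbf 1)^2\big)(x)$ for affine functions $h_j=\varphi_{|K}\in L(K)$, and then to optimize over the normalization $\big\|\sum_j h_j^2\big\|_\infty=1$ dictated by the definition \eqref{omegaf} of $\Omega$.

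First I would fix $x\in K$ and functions $h_1,\dots,h_m\in L(K)$, and compute $C_n\big(\sum_{j=1}^m (h_j-h_j(x)\mathbf 1)^2\big)(x)$ using Lemma~\ref{lemma1}. For each $j$, expanding the square gives $C_n(h_j^2)-2h_j(x)C_n(h_j)+h_j(x)^2\,C_n(\mathbf 1)$, and I would substitute \eqref{Cn(h)} and \eqref{Cn(h^2)}. The affine term $C_n(h_j)(x)=\frac{n}{n+a}h_j(x)+\frac{a}{n+a}\int_K h_j\,d\mu_n$ and the square term $C_n(h_j^2)(x)=\frac{a^2}{(n+a)^2}\int_K h_j^2\,d\mu_n+\frac{2na}{(n+a)^2}h_j(x)\int_K h_j\,d\mu_n+\frac{n^2}{(n+a)^2}B_n(h_j^2)(x)$ should combine so that, after using \eqref{affinisquares} to write $B_n(h_j^2)=\frac1n T(h_j^2)+\frac{n-1}{n}h_j^2$ and the identity $T(h_j^2)-h_j^2 = T\big((h_j-h_j(x)\mathbf 1)^2\big)$ at the point $x$ (valid because $T$ fixes affine functions by \eqref{ass1}), all the first-order cross terms telescope. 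The remaining contributions are an ``$a$-part'' coming from the measures $\mu_n$ (of order $a^2/(n+a)^2$ times second moments of the $h_j$) and a ``Bernstein part'' of order $1/(n+a)$.

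The main technical step is then the uniform bound. Under the normalization $\big\|\sum_j h_j^2\big\|_\infty=1$ one has $\big|\int_K h_j\,d\mu_n\big|\le\|h_j\|_\infty$ and $\int_K h_j^2\,d\mu_n\le 1$, while the variance-type Bernstein term is controlled by $\frac{1}{n+a}\sup_x T\big(\sum_j(h_j-h_j(x)\mathbf 1)^2\big)(x)\le \frac{1}{n+a}$ after using $\|T\|=1$ and $\big\|\sum_j h_j^2\big\|_\infty=1$. Assembling the $a$-dependent coefficients $\frac{a^2}{(n+a)^2}$, $\frac{2na}{(n+a)^2}$ and the Bernstein coefficient $\frac{n}{(n+a)}\cdot\frac1n=\frac{1}{n+a}$, the total is bounded by $\frac{4a^2+1}{n+a}$; this is exactly the radius squared appearing in \eqref{stimaOmegaCn}. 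Feeding this bound into Proposition~1.6.5 of \cite{ACMLR2014}, which yields $\|C_n(f)-f\|_\infty\le 2\,\Omega\big(f,\sqrt{\,\sup_x C_n(\sum_j(h_j-h_j(x)\mathbf1)^2)(x)}\,\big)$ after taking the infimum over admissible families $(h_j)$, gives the claimed estimate.

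I expect the main obstacle to be the bookkeeping that produces precisely the constant $4a^2+1$: one must track each coefficient carefully, use the affine-invariance of $T$ to cancel the first-order terms, and bound the $\mu_n$-moments and the Bernstein variance separately so that the worst-case coefficients add up to $4a^2+1$ rather than a larger constant. The monotonicity and subadditivity properties of $\Omega$ in its second argument, together with the precise statement of the cited abstract proposition, must be matched exactly so that the square root of the moment bound lands inside $\Omega(f,\cdot)$ as written.
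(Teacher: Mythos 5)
Your overall strategy coincides with the paper's: invoke \cite[Proposition 1.6.5]{ACMLR2014} for the Markov operator $C_n$, compute $C_n\bigl(\sum_{j=1}^m(h_j-h_j(x)\mathbf{1})^2\bigr)(x)$ from Lemma \ref{lemma1} together with \eqref{affinisquares}, bound it by $(4a^2+1)/\bigl((n+a)\delta^2\bigr)$ under the normalization $\bigl\|\sum_j h_j^2\bigr\|_\infty=1/\delta^2$, and finally set $\delta=\sqrt{(4a^2+1)/(n+a)}$. The skeleton is right, but two of the steps would fail as you describe them, and they are precisely the steps that produce the constant $4a^2+1$.

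First, the first-order cross terms do \emph{not} telescope. Combining $C_n(h^2)(x)$ with $-2h(x)C_n(h)(x)$ leaves the surviving term $-\frac{2a^2}{(n+a)^2}\,h(x)\int_K h\,d\mu_n$ (the coefficients $\frac{2na}{(n+a)^2}$ and $-\frac{2a}{n+a}$ combine to $-\frac{2a^2}{(n+a)^2}$; they do not cancel), while your final coefficient list still carries the pre-cancellation factor $\frac{2na}{(n+a)^2}$. Keeping that factor gives a contribution of order $a/(n+a)$ rather than $a^2/(n+a)^2$, and the resulting sum $\frac{2a^2+2na+n}{(n+a)^2}\cdot\frac{1}{\delta^2}$ is \emph{not} $\le\frac{4a^2+1}{(n+a)\delta^2}$ in general (try $a=1/10$, $n=10$). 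Second, to control the surviving cross term summed over $j$ you cannot argue termwise via $\bigl|\int_K h_j\,d\mu_n\bigr|\le\|h_j\|_\infty$, because $\sum_j\|h_j\|_\infty^2$ is not dominated by $\bigl\|\sum_j h_j^2\bigr\|_\infty$. The missing ingredient is Cauchy--Schwarz in the index $j$ followed by Jensen: $\sum_j\bigl|h_j(x)\int_K h_j\,d\mu_n\bigr|\le\bigl(\sum_j h_j^2(x)\bigr)^{1/2}\bigl(\sum_j\int_K h_j^2\,d\mu_n\bigr)^{1/2}\le\bigl\|\sum_j h_j^2\bigr\|_\infty$. With that in place the four contributions $\frac{a^2}{(n+a)^2}$, $\frac{2a^2}{(n+a)^2}$, $\frac{a^2}{(n+a)^2}$ and $\frac{n}{(n+a)^2}$, each multiplied by $1/\delta^2$, add up to $\frac{4a^2+n}{(n+a)^2\delta^2}\le\frac{4a^2+1}{(n+a)\delta^2}$, which is exactly the paper's computation.
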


\begin{proof}
Since $C_n(\mathbf{1})=\mathbf{1}$ ($n\geq 1$), we can apply estimate (1.6.14) in   \cite[Proposition 1.6.5]{ACMLR2014} obtaining, for every $n\geq 1$, $f\in \mathscr{C}(K)$, $x\in K$ and $\delta>0$, and for every $h_1,\ldots,h_m\in L(K)$, $m\geq 1$,
\[\tag{1}\label{formula1}
|C_n(f)(x)-f(x)|\leq \left(1+\frac{1}{\delta^2} \sum_{j=1}^m\mu(x,C_n,h_j)\right)\omega(f;h_1,\ldots,h_m,\delta),
\]
where $\mu(x,C_n,h_j)=C_n((h_j-h_j(x)\mathbf{1})^2)(x)$, $j=1,\ldots,m$. Therefore,
\[\tag{2}\label{formula2}
|C_n(f)(x)-f(x)|\leq \left(1+\tau_n(\delta,x)\right)\Omega(f,\delta),
 \]
where $\Omega(f,\delta)$ is the total modulus of continuity (see \eqref{omegaf}) and
\[
\begin{split}
\tau_n(\delta, x):= \sup\left\{\sum_{j=1}^m \mu(x,C_n,h_j)(x) \ \mid \
m \geq 1,  h_1, \ldots, h_m \in L(K) \right. \\ \left. \textrm{ and
} \left|\left|\sum_{j=1}^m h_j^2\right|\right|_\infty
=\frac{1}{\delta^2}\right\}.
\end{split}
\]
Fix $h\in L(K)$. Keeping \eqref{Cn(h)}, \eqref{Cn(h^2)} and \eqref{affinisquares}  in mind, we have
\[
\begin{split}
&C_n((h-h(x)\mathbf{1})^2)(x)=C_n(h^2)(x)-2h(x)C_n(h)(x)+h^2(x)\\
&=\frac{a^2}{(n+a)^2}\int_Kh^2\,d\mu_n+\frac{2na}{(n+a)^2}h(x) \int_Kh\,d\mu_n +\frac{n^2}{(n+a)^2} B_n(h^2)(x)\\
&-\frac{2a}{n+a}h(x)\int_Kh\,d\mu_n-\frac{2n}{n+a}h^2(x)+h^2(x)\\
&= \frac{a^2}{(n+a)^2}\int_Kh^2\,d\mu_n- \frac{2a^2}{(n+a)^2} h(x) \int_Kh\,d\mu_n+\frac{n}{(n+a)^2} T(h^2)(x)\\
&+ \frac{a^2-n}{(n+a)^2} h^2(x)\\
&\leq  \frac{a^2}{(n+a)^2}\int_Kh^2\,d\mu_n+\frac{2a^2}{(n+a)^2}\left|h(x) \int_Kh\,d\mu_n\right|+\frac{n}{(n+a)^2} T(h^2)(x)\\
&+ \frac{a^2}{(n+a)^2} h^2(x).
\end{split}
\]
Then, using Cauchy-Schwarz and Jensen inequalities, we get
\[
\begin{split}
&\sum_{j=1}^mC_n((h_j-h_j(x)\mathbf{1})^2)(x)\leq \\
&\leq  \frac{a^2}{(n+a)^2}\int_K\sum_{j=1}^mh_j^2\,d\mu_n+\frac{2a^2}{(n+a)^2}\sum_{j=1}^m\left|h_j(x) \int_Kh_j\,d\mu_n\right|\\
&+\frac{n}{(n+a)^2} T\left(\sum_{j=1}^mh_j^2\right)(x)
+ \frac{a^2}{(n+a)^2}\sum_{j=1}^m h_j^2(x)\\
&\leq  \frac{a^2}{(n+a)^2}\left\|\sum_{j=1}^mh_j^2\right\|_{\infty}+\frac{2a^2}{(n+a)^2}\left(\sum_{j=1}^m h_j^2(x) \right)^{1/2}\left(\sum_{j=1}^m\left|\int_Kh_j\,d\mu_n\right|^2\right)^{1/2}\\
&+\frac{n}{(n+a)^2}\left\|\sum_{j=1}^mh_j^2\right\|_{\infty}
+ \frac{a^2}{(n+a)^2}\left\|\sum_{j=1}^m h_j^2\right\|_{\infty}.
 \end{split}
\]
Then, for every $\delta>0$ and for every $h_1, \ldots h_m \in L(K)$, $m \geq 1$, such that $\|\sum_{j=1}^m h_j^2\|_\infty=1/\delta^2$,
\[
\begin{split}
&\sum_{j=1}^mC_n((h_j-h_j(x)\mathbf{1})^2)(x)\leq\\
  &\leq \frac{2a^2}{(n+a)^2}\frac{1}{\delta^2}+\frac{2a^2}{(n+a)^2}\left(\left\|\sum_{j=1}^m h_j^2\right\|_{\infty}\right)^{1/2}\left(\sum_{j=1}^m\int_K h_j^2\,d\mu_n\right)^{1/2}\\
&+\frac{n}{(n+a)^2}\frac{1}{\delta^2}
\leq  \frac{2a^2}{(n+a)^2}\frac{1}{\delta^2}+\frac{2a^2}{(n+a)^2}\left\|\sum_{j=1}^m h_j^2\right\|_{\infty}+\frac{n}{(n+a)^2}\frac{1}{\delta^2}\\
&= \frac{4a^2+n}{\delta^2(n+a)^2} \leq  \frac{4a^2+1}{(n+a)\delta^2}.
 \end{split}
\]

From \eqref{formula2} we infer that
\begin{equation*}
|C_n(f)(x)-f(x)| \leq \left(1+ \dfrac{4a^2+1}{(n+a) \delta^2}\right) \Omega(f, \delta)
\end{equation*}
for every $\delta>0$.

Setting $\delta=\sqrt{{(4a^2+1)}/{(n+a)}}$ we get the claim.

\end{proof}

We proceed to establish some estimates in the finite dimensional case. 
First we state the following result involving $\omega(f,\delta)$.

In the sequel $e_2:K \longrightarrow \mathbf{R}$ will denote the function 
\begin{equation*}\label{e2}
e_2(x):=||x||_2^2= \sum_{i=1}^dpr_i^2(x) \qquad (x \in K)
\end{equation*}
and, for a given $x \in K$, 
$d_x:K \longrightarrow
\mathbf{R}$ will stand for the function
\begin{equation}\label{formula1.4.3}
d_x(y):=||y-x||_2\ \qquad (y \in K).
\end{equation}

\begin{proposition}
For every  $f \in \mathscr{C}(K)$, $n \geq 1$ and $x \in K$,
\begin{equation}\label{omegapunt}
|C_n(f)(x)-f(x)| \leq 2 \omega\left(f, \frac{1}{n+a} \sqrt{a^2\int_K
d_x^2 \ d\mu_n + n (T(e_2)(x)-e_2(x))}\right).
\end{equation}

Moreover,
 \begin{equation}\label{omegaunif}
     \| C_n(f)-f\|_\infty  \leq 2\omega\left(f, \frac{\max(a\delta(K)^2,
     ||T(e_2)-e_2||_\infty)}{\sqrt{n+a}}\right),
     \end{equation}
where $\delta(K):=\sup \{||x-y||_2 \ | \ x,y \in K\}$.
 \end{proposition}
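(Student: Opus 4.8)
The plan is to derive both estimates from the classical pointwise bound for positive linear operators that reproduce the constants. Since $C_n(\mathbf 1)=\mathbf 1$ and $C_n$ is positive, I would first write $|C_n(f)(x)-f(x)|\le C_n(|f-f(x)\mathbf 1|)(x)$ and then, for each $y\in K$, estimate $|f(y)-f(x)|\le\omega(f,\|y-x\|_2)=\omega(f,d_x(y))\le(1+\delta^{-1}d_x(y))\,\omega(f,\delta)$ using the subadditivity property $\omega(f,\lambda\delta)\le(1+\lambda)\omega(f,\delta)$. Applying $C_n$ and the Cauchy--Schwarz inequality for positive operators, $C_n(d_x)(x)=C_n(d_x\cdot\mathbf 1)(x)\le(C_n(d_x^2)(x))^{1/2}$, gives $|C_n(f)(x)-f(x)|\le\bigl(1+\delta^{-1}(C_n(d_x^2)(x))^{1/2}\bigr)\omega(f,\delta)$; the choice $\delta=(C_n(d_x^2)(x))^{1/2}$ then yields $|C_n(f)(x)-f(x)|\le 2\,\omega\bigl(f,(C_n(d_x^2)(x))^{1/2}\bigr)$. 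Thus everything reduces to computing the central second moment $C_n(d_x^2)(x)$.

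To compute it I would use $d_x^2=\sum_{i=1}^d(pr_i-x_i\mathbf 1)^2$, observe that each $pr_i$ belongs to $A(K)$, and apply with $h=pr_i$ the identity for $C_n((h-h(x)\mathbf 1)^2)(x)$ already established in the proof of Proposition \ref{prop3.3} from \eqref{Cn(h)}, \eqref{Cn(h^2)} and \eqref{affinisquares}, namely $C_n((h-h(x)\mathbf 1)^2)(x)=\frac{a^2}{(n+a)^2}\int_K h^2\,d\mu_n-\frac{2a^2}{(n+a)^2}h(x)\int_K h\,d\mu_n+\frac{n}{(n+a)^2}T(h^2)(x)+\frac{a^2-n}{(n+a)^2}h^2(x)$. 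Summing over $i=1,\dots,d$ and recognizing $\sum_i pr_i^2=e_2$, together with the identity $\int_K d_x^2\,d\mu_n=\int_K e_2\,d\mu_n-2\sum_i x_i\int_K pr_i\,d\mu_n+e_2(x)$, I expect the six resulting terms to regroup into $C_n(d_x^2)(x)=\frac{1}{(n+a)^2}\bigl(a^2\int_K d_x^2\,d\mu_n+n(T(e_2)(x)-e_2(x))\bigr)$. Substituting the square root of this into the pointwise bound of the first paragraph produces exactly \eqref{omegapunt}.

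For the uniform estimate \eqref{omegaunif} I would bound the two ingredients of $C_n(d_x^2)(x)$ independently of $x$: since $\mu_n$ is a probability measure and $d_x(t)=\|t-x\|_2\le\delta(K)$ for all $t,x\in K$, one has $\int_K d_x^2\,d\mu_n\le\delta(K)^2$, while trivially $T(e_2)(x)-e_2(x)\le\|T(e_2)-e_2\|_\infty$. Hence $C_n(d_x^2)(x)\le\frac{1}{(n+a)^2}\bigl(a^2\delta(K)^2+n\|T(e_2)-e_2\|_\infty\bigr)$. Using the elementary inequality $a^2\delta(K)^2+n\|T(e_2)-e_2\|_\infty\le(n+a)\max\{a\delta(K)^2,\|T(e_2)-e_2\|_\infty\}$ (each summand is dominated by the corresponding multiple of the maximum) together with the monotonicity of $\delta\mapsto\omega(f,\delta)$, the pointwise bound \eqref{omegapunt} collapses into the uniform estimate \eqref{omegaunif}.

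The routine but delicate point is the second paragraph: the regrouping of the summed terms into the compact form involving $\int_K d_x^2\,d\mu_n$ and $T(e_2)-e_2$ must be done carefully, since the coefficient $\frac{a^2-n}{(n+a)^2}$ of $e_2(x)$ has to be split as $\frac{a^2}{(n+a)^2}-\frac{n}{(n+a)^2}$, so that the $a^2$ part merges with the $\mu_n$-integrals (completing $a^2\int_K d_x^2\,d\mu_n$) while the $-n$ part pairs with $\frac{n}{(n+a)^2}T(e_2)(x)$ to form $\frac{n}{(n+a)^2}(T(e_2)(x)-e_2(x))$. Everything else, including the diameter bound and the elementary inequality used for the uniform estimate, is entirely elementary.
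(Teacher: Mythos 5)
Your proof is correct and follows essentially the same route as the paper's: both reduce the problem to $|C_n(f)(x)-f(x)|\le 2\,\omega\bigl(f,\sqrt{C_n(d_x^2)(x)}\bigr)$ (the paper via the quadratic majorization of the cited Proposition 1.6.5, you via the linear one plus Cauchy--Schwarz for positive operators --- interchangeable here) and then compute $C_n(d_x^2)(x)=\frac{a^2}{(n+a)^2}\int_K d_x^2\,d\mu_n+\frac{n}{(n+a)^2}\bigl(T(e_2)(x)-e_2(x)\bigr)$ by applying Lemma \ref{lemma1} to the coordinate functions, with exactly the regrouping you describe. The only caveat is cosmetic: your (correct) uniform estimate yields $2\,\omega\bigl(f,\sqrt{\max(a\delta(K)^2,\|T(e_2)-e_2\|_\infty)/(n+a)}\bigr)$, whereas \eqref{omegaunif} as printed places the maximum outside the square root; that mismatch is inherited from what appears to be a typo in the statement (the paper's own proof merely asserts that \eqref{omegaunif} ``clearly follows'' from \eqref{omegapunt}) rather than from any gap in your argument.
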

\begin{proof}
Considering the coordinate functions $pr_i$, $i=1, \ldots, d$,
clearly, for every $x \in K$,
$$d_x^2= \sum_{i=1}^d (pr_i-pr_i(x))^2.$$

Therefore,  from  formula \eqref{formula1} of the proof of Proposition \ref{prop3.3} and from  \eqref{omegapr} (see also \cite[Proposition 5.1.4 and
(5.1.13)]{ac}), it follows that, for any $n \geq 1$ and
$\delta >0$,
$$|C_n(f)(x)-f(x)| \leq \left(1+
\frac{1}{\delta^2}C_n(d_x^2)(x)\right) \omega(f, \delta).$$

If $C_n(d_x^2)(x)=0$ then, letting $\delta \rightarrow 0^+$, we get
$C_n(f)(x)=f(x)$ and, in this case, \eqref{omegapunt} is obviously
satisfied.

If $C_n(d_x^2)(x) >0$, then for $\delta:=\sqrt{C_n(d_x^2)(x)}$, we
obtain
$$|C_n(f)(x)-f(x)| \leq 2 \omega(f, \sqrt{C_n(d_x^2)(x)}).$$

On the other hand, by applying Lemma \ref{lemma1} to each $pr_i$,
$i=1, \ldots, d$, we have that
$$C_n(d_x^2)(x)=\frac{a^2}{(n+a)^2} \int_K d_x^2 \ d\mu_n +
\frac{n}{(n+a)^2} (T(e_2)(x)-e_2(x))$$ and hence \eqref{omegapunt}
holds. Clearly \eqref{omegaunif} follows from \eqref{omegapunt}.
\end{proof}


In the sequel we shall show a further estimate of the rate of
convergence in (\ref{approxCnC(K)}) by means of
$\omega_2(f,\delta)$.

To this end, according to \cite{berens}, we denote by
\begin{equation}\label{lambdainfty}
    \lambda_{n,\infty}:=\max_{0\leq i\leq
    d+1}\|C_n(\varphi_i)-\varphi_i\|_{\infty},
\end{equation}
where the functions $\varphi_i$ are defined by
\begin{equation}\label{phyfunctions}
\varphi_0:=\mathbf{1},\,\varphi_i:=pr_i\,\,(i=1,\ldots,d)
\,\,\,\,\textrm{and}\,\,\,\,\varphi_{d+1}:=\sum\limits_{i=1}^dpr_i^2.
\end{equation}

Then we have the following result.

\begin{proposition}
For every $f\in \mathscr{C}(K)$ and $n \geq 1$,
\begin{equation*}\label{berens1}
    \|C_n(f)-f\|_{\infty}\leq
    C\left(\frac{M}{n+a}\|f\|_{\infty}+\omega_2\left(f,\sqrt{\frac{M}{n+a}}\right)\right),
\end{equation*}
where the constants $C$ and $M$ do not depend on $f$. 
\end{proposition}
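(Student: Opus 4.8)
The plan is to reduce everything to the general quantitative Korovkin-type theorem of Berens that already motivated the definition \eqref{lambdainfty} of $\lambda_{n,\infty}$. That result from \cite{berens} asserts that, for $K$ a convex compact subset of $\mathbf{R}^d$ and any positive linear operator $L:\mathscr{C}(K)\to\mathscr{C}(K)$, one has
\[
\|L(f)-f\|_{\infty}\leq C\Bigl(\lambda\,\|f\|_{\infty}+\omega_2\bigl(f,\sqrt{\lambda}\bigr)\Bigr),
\]
where $\lambda:=\max_{0\leq i\leq d+1}\|L(\varphi_i)-\varphi_i\|_{\infty}$ with the test functions $\varphi_i$ of \eqref{phyfunctions}, and $C$ depends only on $d$ (and $K$), not on $f$ nor on $L$. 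Applying this with $L=C_n$ identifies $\lambda$ with $\lambda_{n,\infty}$ and yields at once $\|C_n(f)-f\|_{\infty}\leq C(\lambda_{n,\infty}\|f\|_{\infty}+\omega_2(f,\sqrt{\lambda_{n,\infty}}))$. Thus the entire content of the proposition reduces to the single estimate $\lambda_{n,\infty}\leq M/(n+a)$ for a constant $M$ independent of $n$ and $f$.

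To bound $\lambda_{n,\infty}$, I would treat the three types of test functions separately. The case $\varphi_0=\mathbf{1}$ is trivial since $C_n(\mathbf{1})=\mathbf{1}$, so that term vanishes. For $\varphi_i=pr_i\in A(K)$, formula \eqref{Cn(h)} of Lemma \ref{lemma1} gives $C_n(pr_i)-pr_i=\tfrac{a}{n+a}\bigl(\int_K pr_i\,d\mu_n\cdot\mathbf{1}-pr_i\bigr)$, whence $\|C_n(pr_i)-pr_i\|_{\infty}\leq \tfrac{2a}{n+a}\,r(K)$, which is $O(1/(n+a))$.

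The only genuine computation is the bound on $\|C_n(\varphi_{d+1})-\varphi_{d+1}\|_{\infty}=\|C_n(e_2)-e_2\|_{\infty}$. For each $h=pr_i$ I would combine \eqref{Cn(h^2)} with \eqref{affinisquares} to write $C_n(pr_i^2)$ explicitly in terms of $\int_K pr_i^2\,d\mu_n$, $\bigl(\int_K pr_i\,d\mu_n\bigr)pr_i$, $T(pr_i^2)$ and $pr_i^2$, and then estimate the four resulting coefficients. Using $\|T\|=1$ and the bounds $\bigl|\int_K pr_i\,d\mu_n\bigr|,\ \|pr_i\|_{\infty}\leq r(K)$ and $\|T(pr_i^2)\|_{\infty}\leq r(K)^2$, together with the elementary inequalities $\tfrac{a^2}{(n+a)^2},\ \tfrac{2na}{(n+a)^2},\ \tfrac{n}{(n+a)^2}$ and $\tfrac{n(1+2a)+a^2}{(n+a)^2}$ all being dominated by a constant depending only on $a$ times $\tfrac{1}{n+a}$, each term is $O(1/(n+a))$; summing the $d$ contributions gives $\|C_n(e_2)-e_2\|_{\infty}\leq M'/(n+a)$. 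Taking $M$ to be the maximum of the constants so produced (depending only on $a$, $d$ and $r(K)$, hence on $K$ and $T$ but neither on $f$ nor on $n$) yields $\lambda_{n,\infty}\leq M/(n+a)$.

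Finally, I would conclude by monotonicity: since $t\mapsto\omega_2(f,t)$ is nondecreasing and $\sqrt{\lambda_{n,\infty}}\leq\sqrt{M/(n+a)}$, while $\lambda_{n,\infty}\|f\|_{\infty}\leq\tfrac{M}{n+a}\|f\|_{\infty}$, substitution into the Berens estimate delivers the stated inequality with $C$ and $M$ independent of $f$. The main obstacle is not conceptual but a matter of careful bookkeeping: verifying that \emph{every} coefficient occurring in the expansion of $C_n(e_2)-e_2$ is bounded by a constant times $1/(n+a)$ uniformly in $n\geq 1$, the slightly delicate one being the coefficient $\tfrac{n(1+2a)+a^2}{(n+a)^2}$ coming from the $pr_i^2$ term.
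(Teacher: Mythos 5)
Your proposal is correct and follows essentially the same route as the paper: reduce to the quantitative Korovkin theorem of Berens--De Vore (the paper additionally notes that its applicability rests on $K$, being convex and bounded, having the cone property), and then bound $\lambda_{n,\infty}$ test function by test function via Lemma \ref{lemma1}, obtaining $\lambda_{n,\infty}\leq M/(n+a)$ with $M$ depending only on $a$, $d$ and $r(K)$. The only detail you make explicit that the paper leaves implicit is the final appeal to the monotonicity of $\omega_2(f,\cdot)$.
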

\begin{proof}
Since every convex bounded set has the cone property (see \cite[p.
66]{adams}), from   \cite[Theorem 2']{berens} we infer that, for every $f \in \mathscr{C}(K)$ and $n \geq 1$,
\begin{equation*}
\|C_n(f)-f\|_{\infty}\leq
    C(\lambda_{n,\infty}\|f\|_{\infty}+\omega_2(f,\lambda_{n,\infty}^{1/2})),
    \end{equation*}
where $\lambda_{n,\infty}$ is defined by  \eqref{lambdainfty} and the constant $C$ does not depend on $f$. In order  to estimate $\lambda_{n,\infty}$, note that, for every $n \geq 1$, $C_n(\mathbf{1})=\mathbf{1}$ and, for
$i=1, \ldots, d$ and $x=(x_1, \ldots, x_d) \in K$, taking
\eqref{Cn(h)} into account,
\begin{equation*}
|C_n(pr_i)(x)-pr_i(x)| =\frac{a}{n+a} \left|\int_K pr_i \, d\mu_n
-x_i\right| \leq \frac{2a\|x\|_{2}}{n+a};
\end{equation*}
hence
\[
\|C_n(\varphi_i)-\varphi_i\|_{\infty}\leq \frac{2ar(K)}{n+a}
\]
where $r(K):=\max\{\|x\|_2 \ \mid \ x \in K\} $.

On the other hand, by virtue of \eqref{Cn(h^2)} and \eqref{affinisquares}, for every $n \geq 1$, $i=1, \ldots d$ and $x=(x_1, \ldots, x_d)
\in K$ we get
\begin{equation*}
\begin{split}
& C_n(pr^2_i)(x)-pr^2_i(x) = \frac{1}{(n+a)^2} \left(a^2\int_K
pr_i^2 \, d\mu_n \right. \\ &\left.+ 2na \left(\int_K pr_i \, d\mu_n\right) x_i + n
(T(pr_i^2)(x)-x_i^2) -a (2n+a) x_i^2\right).
\end{split}
\end{equation*}

Therefore, 

\begin{equation*}\begin{split}
&|C_n(\varphi_{d+1})(x)- \varphi_{d+1}(x)|\leq \sum_{i=1}^d\left|C_n(pr^2_i)(x)-pr^2_i(x)\right| \leq   \frac{1}{(n+a)^2} \\& \times \left\{a^2 r(K)^2+ 2a d n \, r(K)^2
+n\sum_{i=1}^d |T(pr_i^2)(x)-x_i^2| + a(2n+a) r(K)^2\right\}
\\
&\leq
\frac{1}{(n+a)^2}\left\{(2a(n+a)+2adn +n) r(K)^2 +n\sum_{i=1}^d T(pr_i^2)(x)\right\}\\
&=
\frac{1}{(n+a)^2}\left\{(2a(n+a)+2adn +n)r(K)^2+nT\left(\sum_{i=1}^d pr_i^2\right)(x)\right\}
\\&
\leq
\frac{r(K)^2}{(n+a)^2}\left\{2a(n+a)+2adn+2n)\right\}.
\end{split}
\end{equation*}

Consequently,
\begin{equation*}
\begin{split}
\|C_n(\varphi_{d+1}) - \varphi_{d+1}\|_\infty  \leq
\frac{2a+2ad+2}{n+a} r(K)^2
\end{split}\end{equation*}
and, if we set
$M:=\max\left\{ 2a r(K), (2a+2ad+2) r(K)^2\right\}$, we get
\begin{equation*}
\lambda_{n,\infty} \leq \frac{M}{n+a};
\end{equation*}
this completes the proof.
\end{proof}

\section{Approximation properties in $\mathscr{L}^p$-spaces}
 
\noindent In this section we shall investigate particular subclasses of the operators $C_n$, $n \geq 1$, which are well-defined in $\mathscr{L}^p(K)$-spaces, $1 \leq p<+\infty$.
This analysis will be carried out in the special cases where $K$ is the $d$-dimensional unit hypercube (in particular, the unit interval) and the $d$-dimensional simplex.

In order to estimate the rate of convergence we recall here the definition of some moduli of smoothness. Let $K$ be a convex compact subset of $\mathbf{R}^d$, $d \geq 1$, having non-empty interior.

If $f:K \rightarrow \mathbf{R}$ is a Borel measurable bounded function and $\delta>0$, we define the (multivariate) averaged modulus of smoothness of the first order for $f$ and step $\delta$ in 
$\mathscr{L}^p$-norm, $1 \leq p <+\infty$, as
\begin{equation}\label{tau}
\tau(f, \delta)_p:=\|\omega(f, \cdot; \delta)\|_p
\end{equation}
where, for every $x \in K$,
\begin{equation}\label{omegalp}
\begin{split}
&\omega(f, x; \delta):=\\&=\sup\{|f(t+h)-f(t)| \mid t, t+h \in K, \|t-x\|_2 \leq \delta/2, \|t+h-x\|_2 \leq \delta/2\}.
\end{split}
\end{equation}

Furthermore, if $f \in \mathscr{L}^p(K)$, $1 \leq p<+\infty$ and $\delta>0$, the (multivariate) modulus of smoothness for $f$ of order $k$ and step $\delta$ in 
$\mathscr{L}^p$-norm is defined by
\begin{equation}\label{omegapk}
\omega_{k, p}(f, \delta):=\sup_{0 < |h| \leq \delta} \left(\int_K |\Delta_h^k f(x)|^p \, dx\right)^{1/p}
\end{equation}
where, for $x \in K$,
\begin{equation}\label{omegahkf}
\Delta_h^kf(x):=\left\{\begin{array}{ll}
\sum\limits_{l=0}^k (-1)^{k-l} \binom{k}{l} f(x+lh) & \textrm{ if } x+hk \in K,\\ \\
0 & \textrm{ otherwise}.
\end{array}
\right.
\end{equation}

 We now restrict ourselves to consider the $d$-dimensional unit hypercube $Q_d:=[0,1]^d$, $d\geq 1$. For every $n\geq 1$, $h=(h_1,\ldots,h_d)\in\{0,\ldots,n\}^d$ and $x=(x_1,\ldots,x_d)\in Q_d$, we set
 \begin{equation}\label{new3.1}
 P_{n,h}(x):=\prod_{i=1}^d \binom{n}{h_i} x_i^{h_i}(1-x_i)^{n-h_i};
\end{equation}
then
\begin{equation}\label{new3.3}
 P_{n,h}\geq 0\quad\textrm{and}\quad \sum_{h\in\{0,\ldots,n\}^d }  P_{n,h}=1\quad\textrm{on}\,\,Q_d.
\end{equation}
Further,
 \begin{equation}\label{new3.4}
\int_{Q_d}P_{n,h}(x)\,dx=\prod_{i=1}^d\binom{n}{h_i}\int_0^1 x_i^{h_i}(1-x_i)^{n-h_i}\,dx_i=\frac{1}{(n+1)^d}.
\end{equation}

Moreover, for any given $a\geq 0$, we set
 \begin{equation}\label{new3.2}
 Q_{n,h}(a):=\prod_{i=1}^d  \left[\frac{h_i}{n+a},\frac{h_i+a}{n+a} \right]\subset Q_d;
\end{equation}
in particular,
\begin{equation}\label{new3.5}
\bigcup_{h\in\{0,\ldots,n\}^d }  Q_{n,h}(a)=Q_d.
\end{equation}

Consider the operators $C_n$, $n\geq 1$, defined by \eqref{new2.12} with all the measures $\mu_n$ equal to Borel-Lebesgue measure on $Q_d$. In this case the operators
$C_n$ are well defined on $\mathscr{L}^1(Q_d)$ as well and the operator $S_d$ satisfies \eqref{ass1}. Moreover,  if $f\in\mathscr{L}^1(Q_d)$ and $x\in Q_d$,
 \begin{equation}\label{new3.6}
 C_n(f)(x)=\sum_{h\in\{0,\ldots,n\}^d }P_{n,h}(x)\int_{Q_d} f\left(\frac{h+au}{n+a}\right)du;
 \end{equation}
in particular,  if $a>0$, then
  \begin{equation}\label{new3.61}
 C_n(f)(x)=\sum_{h\in\{0,\ldots,n\}^d }P_{n,h}(x)\left(\frac{n+a}{a}\right)^d\int_{Q_{n,h}(a)} f(v)\,dv.
   \end{equation}

If $d=1$, formulae \eqref{new3.6} and \eqref{new3.61} turn into
\begin{equation}\label{new3.7}
C_n(f)(x)=\sum_{h=0}^n \binom{n}{h}x^h(1-x)^{n-h}\int_0^1 f\left(\frac{h+as}{n+a}\right)ds
\end{equation}
and, if $a>0$,
\begin{equation}\label{new3.71}
C_n(f)(x)=\sum_{h=0}^n \binom{n}{h}x^h(1-x)^{n-h}\left(\frac{n+a}{a}\right)\int_{\frac{h}{n+a}}^{\frac{h+a}{n+a}} f(t)\,dt.
   \end{equation}

\begin{theorem}\label{approxLp}
Assume that $a>0$. If $f\in\mathscr{L}^p(Q_d)$, $1\leq p<+\infty$, then $\displaystyle\lim_{n\to \infty} C_n(f)=f$ in $\mathscr{L}^p(Q_d)$.
\end{theorem}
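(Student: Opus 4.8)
The plan is to use the classical three-step scheme for $\mathscr{L}^p$-convergence of a sequence of operators: first establish that the $C_n$ are uniformly bounded as operators on $\mathscr{L}^p(Q_d)$; second, invoke the already-proved uniform convergence on the dense subspace $\mathscr{C}(Q_d)$; and third, combine these via a density argument. Since $Q_d$ has finite Lebesgue measure, Theorem \ref{ApproxC(K)} immediately yields $\mathscr{L}^p$-convergence on continuous functions, because $\|C_n(g)-g\|_p\leq\|C_n(g)-g\|_\infty\to 0$ for every $g\in\mathscr{C}(Q_d)$. Thus the entire difficulty is concentrated in the uniform boundedness estimate.

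For the uniform bound I would start from the representation \eqref{new3.6}. Because $P_{n,h}\geq 0$, $\sum_h P_{n,h}=1$ and the inner integral $\int_{Q_d}\cdots\,du$ is an average over a set of measure one, the map $f\mapsto C_n(f)(x)$ is, for each fixed $x$, a probability average; hence Jensen's inequality applied to the convex function $t\mapsto|t|^p$ gives
\[
|C_n(f)(x)|^p\leq\sum_{h}P_{n,h}(x)\int_{Q_d}\left|f\left(\frac{h+au}{n+a}\right)\right|^p\,du.
\]
Integrating in $x$ over $Q_d$ and using \eqref{new3.4}, namely $\int_{Q_d}P_{n,h}=(n+1)^{-d}$, reduces matters to estimating $\sum_{h}\int_{Q_d}|f((h+au)/(n+a))|^p\,du$. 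The change of variables $v=(h+au)/(n+a)$ turns each summand into $((n+a)/a)^d\int_{Q_{n,h}(a)}|f(v)|^p\,dv$, exactly as in the passage from \eqref{new3.6} to \eqref{new3.61}.

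The crux — and the step I expect to be the main obstacle — is controlling the overlaps of the cubes $Q_{n,h}(a)$, which genuinely overlap once $a>1$. The key geometric observation is that each point of $Q_d$ lies in at most $(\lfloor a\rfloor+1)^d$ of the cubes $Q_{n,h}(a)$: in each coordinate, $v\in[h_i/(n+a),(h_i+a)/(n+a)]$ forces $(n+a)v-a\leq h_i\leq(n+a)v$, an interval of length $a$ containing at most $\lfloor a\rfloor+1$ integers. Hence $\sum_h\chi_{Q_{n,h}(a)}\leq(\lfloor a\rfloor+1)^d$ pointwise on $Q_d$, so that $\sum_h\int_{Q_{n,h}(a)}|f|^p\leq(\lfloor a\rfloor+1)^d\|f\|_p^p$. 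Assembling the pieces yields
\[
\|C_n(f)\|_p^p\leq\frac{1}{(n+1)^d}\left(\frac{n+a}{a}\right)^d(\lfloor a\rfloor+1)^d\,\|f\|_p^p,
\]
and since $(n+a)/(n+1)$ stays bounded in $n$, the operator norms $\|C_n\|_{\mathscr{L}^p\to\mathscr{L}^p}$ are bounded by a constant $M$ independent of $n$.

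With uniform boundedness in hand, I would finish by the standard approximation argument. Given $f\in\mathscr{L}^p(Q_d)$ and $\varepsilon>0$, choose $g\in\mathscr{C}(Q_d)$ with $\|f-g\|_p<\varepsilon$ (density of continuous functions in $\mathscr{L}^p(Q_d)$), and estimate
\[
\|C_n(f)-f\|_p\leq\|C_n\|\,\|f-g\|_p+\|C_n(g)-g\|_p+\|g-f\|_p\leq(M+1)\varepsilon+\|C_n(g)-g\|_p.
\]
Letting $n\to\infty$ makes the last term vanish by the remark above, so $\limsup_n\|C_n(f)-f\|_p\leq(M+1)\varepsilon$; as $\varepsilon$ is arbitrary, $C_n(f)\to f$ in $\mathscr{L}^p(Q_d)$.
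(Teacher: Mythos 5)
Your argument is essentially the paper's own: density of $\mathscr{C}(Q_d)$ in $\mathscr{L}^p(Q_d)$ combined with equiboundedness of the $C_n$, the latter obtained from Jensen's inequality, the change of variables turning the inner integral into $\left(\frac{n+a}{a}\right)^d\int_{Q_{n,h}(a)}|f(v)|^p\,dv$, and the identity \eqref{new3.4}. The one place you go beyond the paper is the bounded-overlap count $\sum_h\chi_{Q_{n,h}(a)}\leq(\lfloor a\rfloor+1)^d$: the paper passes directly from $\sum_h\int_{Q_{n,h}(a)}|f|^p\,dv$ to $M\int_{Q_d}|f|^p\,dv$ with $M=\sup_{n\geq 1}\bigl(\frac{n+a}{a(n+1)}\bigr)^d$, which tacitly treats the cubes $Q_{n,h}(a)$ as essentially disjoint and is therefore only justified for $a\leq 1$, so your extra counting step is actually what is needed to make the equiboundedness estimate correct for $a>1$.
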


\begin{proof}
Since $\mathscr{C}(Q_d)$ is dense in $\mathscr{L}^p(Q_d)$ with respect to the $\mathscr{L}^p$-norm $\|\cdot\|_p$ and since, on account of Theorem  \ref{ApproxC(K)}, $\lim_{n\to \infty} C_n(f)=f$ in $\mathscr{L}^p(Q_d)$ for every $f\in\mathscr{C}(Q_d)$, it is enough to show that the sequence $(C_n)_{n\geq 1}$ is equibounded from   $\mathscr{L}^p(Q_d)$ into $\mathscr{L}^p(Q_d)$.

Fix, indeed, $f\in\mathscr{L}^p(Q_d)$, $n\geq 1$, and $x\in Q_d$. By recalling that the function $|t|^p$ ($t\in\mathbf{R}$) is convex and that 
\[
\left| \int_{Q_d} g(u)\,du \right|^p\leq \int_{Q_d} |g(u)|^p\,du
\]
for every $g\in \mathscr{L}^p(Q_d)$, setting $M:=\displaystyle \sup_{n\geq 1} \left(\frac{n+a}{a(n+1)}\right)^d$, on account of \eqref{new3.3} we get
\[
\begin{split}
&|C_n(f)(x)|^p\leq \sum_{h\in\{0,\ldots,n\}^d }P_{n,h}(x)\int_{Q_d}\left| f\left(\frac{h+au}{n+a}\right)\right|^p du\\
&= \sum_{h\in\{0,\ldots,n\}^d }P_{n,h}(x) \left(\frac{n+a}{a}\right)^d\int_{Q_{n,h}(a)}| f(v)|^p dv.
\end{split}
\]
 
Therefore, by using \eqref{new3.4} we obtain
\[
\begin{split}
&\int_{Q_d}|C_n(f)(x)|^p dx\leq \sum_{h\in\{0,\ldots,n\}^d } \left(\frac{n+a}{a(n+1)}\right)^d\int_{Q_{n,h}(a)}| f(v)|^p dv\\
&\leq M \int_{Q_d}| f(v)|^p dv,
\end{split}
\]
i.e., $\|C_n(f)\|_p\leq M^{1/p} \|f\|_p$, and so the result follows.
\end{proof}


We present some estimates of the rate of convergence in Theorem \ref{approxLp}. 
\begin{proposition}
For every Borel measurable bounded function $f$ on $Q_d$, $1\leq p<+\infty$ and $n\geq 1$,
\begin{equation}\label{stima1}
\|C_n(f)-f\|_p\leq C \tau\left(f; \sqrt[2d]{\frac{3n+a^2}{12(n+a)^2}}\right)
\end{equation}
(see 	\eqref{tau}) where the positive constant $C$ does not depend on $f$. 
\end{proposition}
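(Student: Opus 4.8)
The plan is to estimate $\|C_n(f)-f\|_p$ by comparing each local piece of $C_n(f)$ against $f$ on the corresponding small box $Q_{n,h}(a)$, and then to invoke a known estimate for the averaged modulus $\tau(f,\delta)_p$ of the form $\|C_n(f)-f\|_p \leq C\,\tau(f,\delta_n)_p$, where $\delta_n$ is controlled by the local spread of the kernel. Concretely, I would start from the representation \eqref{new3.61}, writing $C_n(f)(x)=\sum_{h}P_{n,h}(x)\bigl(\tfrac{n+a}{a}\bigr)^d\int_{Q_{n,h}(a)}f(v)\,dv$, so that $C_n(f)(x)$ is a convex combination (since $\sum_h P_{n,h}=1$ by \eqref{new3.3}) of averages of $f$ over the boxes $Q_{n,h}(a)$. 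The averaged modulus $\tau(f,\cdot)_p$ is precisely the tool designed to measure how well such local averages approximate $f$ in $\mathscr{L}^p$-norm, because $\omega(f,x;\delta)$ in \eqref{omegalp} captures the oscillation of $f$ near $x$ at scale $\delta$.

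Next I would identify the correct step size $\delta_n$. The natural scale is governed by the second moment of the kernel, i.e. by $C_n(d_x^2)(x)$, which was already computed in the proof of the preceding proposition to equal $\frac{a^2}{(n+a)^2}\int_K d_x^2\,d\mu_n+\frac{n}{(n+a)^2}(T(e_2)(x)-e_2(x))$. Specializing to the hypercube $Q_d$ with $\mu_n=\lambda_d$ and $T=S_d$, one evaluates $\int_{Q_d} d_x^2\,d\lambda_d$ and the Bernstein variance term $S_d(e_2)(x)-e_2(x)=\sum_i x_i(1-x_i)$; each coordinate contributes a variance of order $\tfrac{1}{12}$ from the uniform-average part and $x_i(1-x_i)\leq \tfrac14$ from the Bernstein part. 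Combining these per-coordinate contributions and taking the $2d$-th root (the exponent $1/(2d)$ in \eqref{stima1} reflecting that one works coordinatewise and the box has $d$ factors) should produce exactly $\delta_n=\sqrt[2d]{\frac{3n+a^2}{12(n+a)^2}}$, where the numerator $3n+a^2$ packages the Bernstein term ($\sim n/4$, giving $3n$ after clearing the factor $12$) and the averaging term ($\sim a^2/12$).

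The technical engine I would cite is a general $\mathscr{L}^p$-estimate for positive operators of this Bernstein--Kantorovich type in terms of $\tau(f,\cdot)_p$, of the kind established for averaged moduli in the literature on $\tau$-moduli; the hypothesis is that the operator is given by a partition-of-unity kernel $P_{n,h}$ together with averaging over small sets of diameter controlled by $\delta_n$. I would verify the two ingredients such a theorem requires: first, that $\{Q_{n,h}(a)\}$ covers $Q_d$ (this is \eqref{new3.5}) with controlled overlap, and second, that the local diameter of each $Q_{n,h}(a)$ together with the Bernstein localization of $P_{n,h}$ around $h/n$ matches the scale $\delta_n$. The equiboundedness already proved in Theorem \ref{approxLp} (with constant $M^{1/p}$) guarantees the operators are uniformly bounded on $\mathscr{L}^p(Q_d)$, which is the standard prerequisite for such a $\tau$-estimate to hold with a constant $C$ independent of $f$.

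The main obstacle I anticipate is bookkeeping the variance computation so that the constants coalesce into the precise quantity $\frac{3n+a^2}{12(n+a)^2}$ under the root, rather than merely an $O\!\bigl((n+a)^{-1}\bigr)$ bound: one must keep track of the exact factor $\tfrac{1}{12}$ coming from $\int_0^1(t-c)^2$-type integrals over the averaging interval of length $\tfrac{a}{n+a}$, and combine it correctly with the $\tfrac{n}{(n+a)^2}\cdot\tfrac14$ Bernstein contribution per coordinate. A secondary subtlety is justifying the passage from the pointwise oscillation control to the global $\mathscr{L}^p$-bound through $\tau(f,\delta_n)_p$; this is where the cited averaged-modulus theorem does the heavy lifting, and the verification reduces to checking that the kernel satisfies its structural hypotheses, which the explicit forms \eqref{new3.61} and \eqref{new3.5} make routine.
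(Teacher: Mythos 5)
Your proposal follows essentially the same route as the paper: the paper invokes the remark on p.~285 of \cite{Quak}, which gives $\|C_n(f)-f\|_p\le C\,\tau\bigl(f;\sqrt[2d]{M}\bigr)_p$ with $M=\sup\{C_n((pr_i-pr_i(x)\mathbf{1})^2)(x)\mid i=1,\dots,d,\ x\in Q_d\}$ (provided $M\le 1$), and then computes these coordinatewise second moments via Lemma \ref{lemma1} to obtain exactly $\tfrac{3n+a^2}{12(n+a)^2}$. The only imprecision in your sketch is that the relevant quantity is the supremum over the coordinates $i$ of the second moments, not their sum $C_n(d_x^2)(x)$, which would cost an extra factor of $d$; your per-coordinate variance bookkeeping (Bernstein part $nx_i(1-x_i)/(n+a)^2\le n/(4(n+a)^2)$ plus averaging part $a^2/(12(n+a)^2)$) is, however, exactly what the paper's computation amounts to.
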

\begin{proof} 
Let $f$ be a Borel measurable bounded function on $Q_d$ and $p\geq 1$. 

For a fixed $x\in Q_d$, set $\Psi_x(y):=y-x$ for every $y\in Q_d$. 
By virtue of \cite[Remark p. 285]{Quak}, defining $M:=\sup\{C_n((pr_i\circ \Psi_x)^2)(x)\,|\, i=1,\ldots,d, x\in Q_d\}$, then there exists a constant $C$ such that
\[
\|C_n(f)-f\|_p\leq C\tau(f; \sqrt[2d]{M})_p
\]
provided $M \leq 1$.

Therefore, in order to obtain the desired result  it is enough  to estimate $M$. 
Since for every $n\geq 1$, $i=1,\ldots,d$ and $x\in Q_d$, we have $(pr_i\circ \Psi_x)^2=pr_i^2-2x_ipr_i+x_i\mathbf{1}$,  then
\[
C_n((pr_i\circ \Psi_x)^2)(x)=C_n(pr_i^2)(x)-2x_iC_n(pr_i)(x)+x_i^2C_n(\mathbf{1})(x).
\]
One has $C_n(\mathbf{1})(x)=1$ and, from Lemma \ref{lemma1},
\[\begin{split}
&C_n(pr_i)(x)=\frac{a}{2(n+a)}+\frac{n}{n+a}x_i.
\end{split}\]

Moreover, since  $B_n(pr_i^2)=\dfrac{1}{n}x_i+\dfrac{n-1}{n} x_i^2$ (see \eqref{affinisquares} and \eqref{projh1}), then
\[\begin{split}
&C_n(pr_i^2)(x)=
\frac{a^2}{3(n+a)^2}+\frac{n(a+1)}{(n+a)^2} x_i+\frac{n(n-1)}{(n+a)^2}  x_i^2.
\end{split}\] 
Then
\[\begin{split}
&C_n((pr_i\circ \Psi_x)^2)(x)=\frac{a^2-n}{(n+a)^2}x_i^2+\frac{n-a^2}{(n+a)^2}x_i+\frac{a^2}{3(n+a)^2}\\&=\frac{n-a^2}{(n+a)^2}x_i(1-x_i)+\frac{a^2}{3(n+a)^2}\leq \frac{n-a^2}{4(n+a)^2}+\frac{a^2}{3(n+a)^2}= \frac{3n+a^2}{12(n+a)^2}.
\end{split}\] 
Therefore $M \leq \dfrac{3n+a^2}{12(n+a)^2} \leq 1$ and the result follows. 
\end{proof}

Now we present some estimates of the approximation error $\|C_n(f)-f\|_p$ by applying the results contained in \cite{berens}. To this end for every $n\geq 1$ and $p\in[1,+\infty[$, we have to estimate the quantity $\lambda_{n,p}$ defined by
\begin{equation}\label{lambdap}
\lambda_{n,p}:=\max_{0\leq i\leq d+1}\|C_n(\varphi_i)-\varphi_i\|_p,
\end{equation}
where the functions $\varphi_i$ are defined by \eqref{phyfunctions}.

Note that $\|C_n(\mathbf{1})-\mathbf{1}\|_p=0$. For every $i=1,\ldots,d$, by virtue of Lemma \ref{lemma1} we get
\[
C_n(pr_i)(x)-pr_i(x)=\frac{a}{n+a}\left(\frac{1}{2}-x_i\right),
\]
therefore
\[
\begin{split}
&\|C_n(pr_i)-pr_i\|_p=\frac{a}{n+a}\left(\int_{Q_d}\left|\frac{1}{2}-x_i\right|^pdx\right)^{1/p}\\&=\frac{a}{n+a}\left(\int_{0}^1\left|\frac{1}{2}-x_i\right|^pdx_i\right)^{1/p}
=\frac{a}{2(n+a)(p+1)^{1/p}}\leq \frac{a}{4(n+a)}.
\end{split}
\]
Moreover, 
\[
\begin{split}
&\|C_n(\varphi_{d+1})-\varphi_{d+1}\|_p\\
&=\frac{1}{(n+a)^2}\left(\int_{Q_d} \left|\frac{a^2d}{3}+n(a+1)\sum_{i=1}^dx_i-(n+2na+a^2)\sum_{i=1}^dx_i^2\right|^pdx\right)^{1/p}\\
&\leq\frac{1}{(n+a)^2}\left(\int_{Q_d} \left(\frac{a^2d}{3}+n(a+1)d+(n+2na+a^2)d\right)^pdx\right)^{1/p}\\
&\leq\frac{d}{(n+a)^2} \left(\frac{a^2}{3}+n(a+1)+(n+2na+a^2)\right)\\
&\leq \frac{d}{n+a}\left(\frac{a^2}{3}+(a+1)+(a+1)^2\right)\leq \frac{3d(a+1)^2}{n+a}.
\end{split}
\]
Hence
	\begin{equation}\label{stimalambdap1}
\lambda_{n,p}\leq  \frac{3d(a+1)^2}{n+a}.
\end{equation}

Consider the Sobolev space $\mathscr{W}_{\infty}^2(K)$  of all functions  $f\in\mathscr{L}^{\infty}(K)$ such that, for every $|k|\leq 1$, $D^kf$ exists (in the Sobolev sense) and $D^kf\in\mathscr{L}^{\infty}(K)$, endowed with the norm $\|f\|_{2,\infty}:=\max_{|k|\leq 2}\|D^kf\|_{\mathscr{L}^{\infty}(K)}$. 

\begin{proposition}
If $f\in W_\infty^2(Q_d)$ then, for every $1\leq p<+\infty$ and $n\geq 1$,
\begin{equation}\label{stimafirst1}
\|C_n(f)-f(x)\|_p\leq C\|f\|_{2,\infty}\lambda_{n,p}\leq \tilde{C}\|f\|_{2,\infty}\frac{1}{n+a},
\end{equation}
where the constants $C$ and $\tilde{C}$ do not depend on $f$. 

Moreover, if $f\in \mathscr{L}^1(Q_d)$, then, for every $n\geq 1$,
\begin{equation}\label{stimasecond1}\begin{split}
&\|C_n(f)-f(x)\|_1\leq C(\lambda_{n,1}\|f\|_{1}+\omega_{d+2,1} (f,\lambda_{n,1}^{1/(d+2)}))\\
& \leq C \left( \frac{3d(a+1)^2}{n+a}\|f\|_{1}+\omega_{d+2,1} \left(f,\left( \frac{3d(a+1)^2}{n+a}\right)^{1/(d+2)}\right)\right),
\end{split}\end{equation}
with $C$   not depending on $f$. 

\end{proposition}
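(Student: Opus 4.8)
The plan is to obtain both inequalities as direct applications of the general quantitative estimates for positive linear operators on $\mathscr{L}^p$-spaces established in \cite{berens}, the essential preparatory work — the bound \eqref{stimalambdap1} for $\lambda_{n,p}$ — having already been carried out. Before invoking those results I would first verify that the operators $C_n$ satisfy the structural hypotheses required there: by Theorem \ref{approxLp} and its proof each $C_n$ is a positive linear operator mapping $\mathscr{L}^p(Q_d)$ into itself, it reproduces constants since $C_n(\mathbf{1})=\mathbf{1}$, and the sequence $(C_n)_{n\geq 1}$ is equibounded on $\mathscr{L}^p(Q_d)$; I would also record that $Q_d$, being convex and bounded, enjoys the cone property (cf.\ \cite[p.~66]{adams}), which is what allows the Berens machinery to apply on this domain.

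For the first inequality I would appeal to the direct estimate in \cite{berens} valid for functions in the Sobolev space $\mathscr{W}_\infty^2(Q_d)$, which bounds $\|C_n(f)-f\|_p$ by $C\|f\|_{2,\infty}\lambda_{n,p}$ with a constant $C$ independent of $f$; this gives the first half of \eqref{stimafirst1}. Substituting the bound \eqref{stimalambdap1}, namely $\lambda_{n,p}\leq 3d(a+1)^2/(n+a)$, then yields the second half with $\tilde{C}:=3d(a+1)^2\,C$.

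For the second ($\mathscr{L}^1$) estimate I would invoke the companion result in \cite{berens} expressed through the $\mathscr{L}^1$-modulus of smoothness $\omega_{d+2,1}$ (see \eqref{omegapk}), which furnishes the bound $\|C_n(f)-f\|_1\leq C\bigl(\lambda_{n,1}\|f\|_1+\omega_{d+2,1}(f,\lambda_{n,1}^{1/(d+2)})\bigr)$ for every $f\in\mathscr{L}^1(Q_d)$, giving the first line of \eqref{stimasecond1}. Inserting \eqref{stimalambdap1} once more — now with $p=1$ — into the first summand, and using the monotonicity of $\delta\mapsto\omega_{d+2,1}(f,\delta)$ to replace $\lambda_{n,1}^{1/(d+2)}$ by $(3d(a+1)^2/(n+a))^{1/(d+2)}$ in the second, produces the final form of \eqref{stimasecond1}.

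The step I expect to require the most care is matching the exact hypotheses of the relevant theorems of \cite{berens} to the present setting — in particular confirming that positivity, constant-reproduction and equiboundedness are all that is needed, with no further commutation or regularity condition on the $C_n$, and accounting for the dimension-dependent order $d+2$ appearing in the modulus of smoothness of the $\mathscr{L}^1$ estimate, since this order is dictated by the way the cone property of $Q_d$ enters the general theory rather than by anything specific to our operators.
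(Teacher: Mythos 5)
Your proposal is correct and follows essentially the same route as the paper: both estimates are obtained by applying Theorems 1 and 2 of \cite{berens} to the positive, constant-preserving operators $C_n$ and then inserting the previously established bound \eqref{stimalambdap1} on $\lambda_{n,p}$. The extra care you devote to checking the structural hypotheses (positivity, $C_n(\mathbf{1})=\mathbf{1}$, equiboundedness, and the cone property of $Q_d$) is consistent with, and slightly more explicit than, the paper's one-line proof.
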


\begin{proof}
Keeping \eqref{lambdap} and \eqref{stimalambdap1} in mind, estimate \eqref{stimafirst1} follows from Theorem 1 in \cite{berens}; moreover formula \eqref{stimasecond1} is a consequence of \cite[Theorem 2]{berens} and \eqref{stimalambdap1}.
\end{proof}

 In order to obtain a  result similar to  Theorem \ref{approxLp}  for the $d$-dimensional simplex $K_d$, $d\geq 1$, we shall adapt the  proof for $Q_d$ by making the necessary modifications.
 
 As usual, for every $n\geq 1$ and $h=(h_1,\ldots, h_d)\in\{0,\ldots,n\}^d$ we set $|h|=h_1+\ldots+h_d$. 
 
 For every $n\geq 1$, $h=(h_1,\ldots, h_d)\in\{0,\ldots,n\}^d$, $|h|\leq n$ and $x=(x_1,\ldots,x_d)\in K_d$ we set
 \begin{equation}\label{new3.8}
 P_{n,h}^*(x):= \frac{n!}{h_1!\cdots h_d!(n-h_1- \ldots-h_d)!} x_1^{h_1}\cdots x_d^{h_d}\left(1- \sum\limits_{i=1}^d
x_i\right)^{n-\sum\limits_{i=1}^d h_i}.
 \end{equation}
 
 Then
  \begin{equation}\label{new3.11}
 P_{n,h}^*\geq 0\quad \textrm{and} \quad\sum_{h\in\{0,\ldots,n\}^d,\atop  |h|\leq n} P_{n,h}=1\quad \textrm{on}\,\,K_d,
 \end{equation}
and, on account of \cite[Section 976]{E}, 
 \begin{equation}\label{new3.12}
\int_{K_d} P_{n,h}^*(x)\,dx=\frac{n!}{(n+d)!} =\frac{1}{(n+1)(n+2)\cdots (n+d)} \leq \frac{1}{(n+1)^d}.
\end{equation}

Moreover,  for any $a\geq 0$, we set
 \begin{equation}\label{new3.9}
 \begin{split}
K_{n,h}(a):= \left\{ (x_1,\ldots,x_d)\in \mathbf{R}^d\;\big|\; \frac{h_i}{n+a}\leq x_i \textrm{ for each } i=1,	\ldots,d\ \textrm{ and }\right.\\ \left. \sum_{i=1}^d x_i\leq \frac{1}{n+a}\left(a+\sum_{i=1}^d h_i\right)\right\};
\end{split}
 \end{equation}
in particular,
 \begin{equation}\label{new3.10}
K_{n,h}(a)\subset K_d\quad\textrm{and}\quad K_d=\bigcup_{h\in\{0,\ldots,n\}^d,\atop  |h|\leq n} K_{n,h}(a).
 \end{equation}

Consider now the operators $C_n$, $n\geq 1$, defined by \eqref{new2.15} where each $\mu_n$ is the normalized Borel-Lebesgue measure $d!\lambda_d$ on $K_d$. Also in this case the operators $C_n$ are well-defined on $ \mathscr{L}^1(K_d)$ and the operator \eqref{pros} satisfies \eqref{ass1}. Moreover, for every $f\in \mathscr{L}^1(K_d)$ and $x\in K_d$,
\begin{equation}\label{new3.13}
C_n(f)(x)=\sum_{h\in\{0,\ldots,n\}^d,\atop  |h|\leq n} d!P_{n,h}^*(x)\int_{K_d} f\left(\frac{h+au}{n+a}\right) du
\end{equation}
and, if $a>0$,
\begin{equation}\label{new3.131}
C_n(f)(x)= \sum_{h\in\{0,\ldots,n\}^d,\atop  |h|\leq n}P_{n,h}^*(x) d!\left(\frac{n+a}{a}\right)^d\int_{K_{n,h}(a)}f(v)\,dv.
\end{equation}

Therefore, if $1\leq p<+\infty$ and $a>0$,
\[\begin{split}
&\int_{K_d}|C_n(f)(x)|^pdx\leq \sum_{h\in\{0,\ldots,n\}^d,\atop  |h|\leq n}\int_{K_d}P_{n,h}^*(x)\,dx\,\,  d!\left(\frac{n+a}{a}\right)^d\int_{K_{n,h}(a)}|f(v)|^p\,dv\\
&\leq  d!\left(\frac{n+a}{a(n+1)}\right)^d\int_{K_d}|f(v)|^p\,dv,
\end{split}
\]
and hence, setting $\overline{M}:=\displaystyle\sup_{n\geq 1} d!\left(\frac{n+a}{a(n+1)}\right)^d$, we get 
\[
\|C_n(f)\|_p\leq \overline{M}^{1/p}\|f\|_p.
\] 

By reasoning as in the proof of Theorem \ref{approxLp} we infer that 
\begin{theorem}\label{approcLpbis}
If $a>0$, then, for every $f\in\mathscr{L}^p(K_d)$, $1\leq p<+\infty$, $\displaystyle\lim_{n\to \infty}C_n(f)=f$ in $\mathscr{L}^p(K_d)$.
\end{theorem}

Also for  Theorem \ref{approcLpbis} we shall furnish some estimates of the rate of convergence 
by applying the results contained in \cite{berens}. In order to do this, first we evaluate $\lambda_{n,p}$ for each $n	\geq 1$ and $1\leq p<+\infty$ (see \eqref{lambdap}).

First of all, we recall that (see \cite[Section 976]{E}), for every $k \in \mathbf{N}$,
\begin{equation}\label{intprip}
\int_{K_d}x_i^k dx=\frac{\Gamma(k+1)}{\Gamma(k+d)}\frac{1}{k+d}=\frac{1}{(k+d)(k+d-1)\cdots(k+1)}.
\end{equation}

As in the case of hypercube,  $\|C_n(\mathbf{1})-\mathbf{1}\|_p=0$. For every $i=1,\ldots,d$, by virtue of Lemma \ref{lemma1} and \eqref{intprip} for $k=0, 1$, we get that, for every $x=(x_1, \ldots, x_d) \in K_d$,
\[
|C_n(pr_i)(x)-pr_i(x)|=\frac{a}{n+a}\left|\frac{1}{d+1}-x_i\right| \leq \frac{a}{n+a};
\]
therefore
\[
\|C_n(pr_i)-pr_i\|_p^\leq  \frac{a}{(n+a)(d!)^{1/p}}.\]

Moreover, from \eqref{affinisquares} and \eqref{pros} it follows that $B_n(pr_i^2)=\dfrac{n-1}{n}pr_i^2+\dfrac{1}{n}pr_i$. Thus, from Lemma \ref{lemma1} and \eqref{intprip} for $k=2$, for every $x=(x_1, \ldots, x_d) \in K_d$,
we get

\[\begin{split}
&C_n(pr_i^2)(x)=
\frac{a^2d!}{(n+a)^2}\frac{2}{(d+2)!}+\frac{2nad!}{(n+a)^2} \frac{1}{(d+1)!}x_i\\&+\frac{n^2}{(n+a)^2}\left[\frac{1}{n}x_i+\frac{n-1}{n} x_i^2\right]
\\&=\frac{2a^2}{(n+a)^2(d+2)(d+1)}+\frac{n(2a+d+1)}{(n+a)^2(d+1)} x_i+\frac{n(n-1)}{(n+a)^2} x_i^2
\end{split}\] 
 and hence (see \eqref{phyfunctions})
\[
\begin{split}
&|C_n(\varphi_{d+1})(x)-\varphi_{d+1}(x)|=
\frac{1}{(n+a)^{2}}  \\&\times \left|\frac{2a^2d}{(d+2)(d+1)}+\frac{n(2a+d+1)}{ (d+1)} \sum_{i=1}^dx_i-(n+2na+a^2)\sum_{i=1}^dx_i^2\right|\\
&\leq \frac{1}{(n+a)^{2}}  \left(\frac{2a^2d}{(d+2)(d+1)}+\frac{n(2a+d+1)}{ (d+1)} +n+2na+a^2\right).
\end{split}
\]

Thus
\[
\begin{split}
&\|C_n(\varphi_{d+1})-\varphi_{d+1}\|_p  \\
&\leq  \frac{1}{(n+a)^{2} (d!)^{1/p}}  \left(\frac{2a^2d}{(d+2)(d+1)}+\frac{n(2a+d+1)}{ (d+1)} +n+2na+a^2\right)\\
& \leq   \frac{2a^2+2d(a+1)+(d+1)(a+1)^2}{(n+a) (d!)^{1/p}(d+1)} \leq  \frac{3d+3}{ (d!)^{1/p}(d+1)} \frac{(a+1)^2}{n+a}.
\end{split}
\]

Accordingly,
	\begin{equation}\label{stimalambdap}
\lambda_{n,p}\leq   \frac{3(a+1)^2}{(d!)^{1/p}(n+a)}.
 \end{equation}

As a consequence, we get the following result.

\begin{proposition}
If $f\in W_\infty^2(K_d)$ then, for every  $n\geq 1$,
\begin{equation}\label{stimafirst}
\|C_n(f)-f(x)\|_p\leq C\|f\|_{2,\infty}\lambda_{n,p}\leq \tilde{C}\|f\|_{2,\infty}\frac{1}{n+a},
\end{equation}
where the constants $C$  and $\tilde{C}$ do  not depend on $f$.

Moreover, if $f\in \mathscr{L}^1(K_d)$, then  for every $1\leq p<+\infty$ and $n\geq 1$,
\begin{equation}\label{stimasecond}\begin{split}
&\|C_n(f)-f(x)\|_1\leq C(\lambda_{n,1}\|f\|_{1}+\omega_{d+2,1} (f,\lambda_{n,1}^{1/(d+2)}))\\
& \leq C \left(   \frac{3(a+1)^2}{d!(n+a)}\|f\|_{1}+\omega_{d+2,1} \left(f,\left(   \frac{3(a+1)^2}{d!(n+a)}\right)^{1/(d+2)}\right)\right)
\end{split}\end{equation}
with $C$  not depending on $f$. 

\end{proposition}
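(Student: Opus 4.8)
The plan is to derive both estimates directly from the general approximation-theory machinery of Berens–DeVore as packaged in \cite{berens}, using the already-computed quantities for the simplex. The two inequalities \eqref{stimafirst} and \eqref{stimasecond} are instances of Theorem 1 and Theorem 2 of \cite{berens} respectively, applied to the operators $C_n$ on $K_d$, with the crucial input being the bound \eqref{stimalambdap} for $\lambda_{n,p}$ that has just been established.

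First I would verify the hypotheses needed to invoke the results of \cite{berens}: the operators $C_n$ are positive linear operators on $\mathscr{L}^p(K_d)$ (established in the lead-up to Theorem \ref{approcLpbis}), they reproduce constants since $C_n(\mathbf{1})=\mathbf{1}$, and $K_d$ is a convex compact set with non-empty interior, hence it enjoys the cone property (as cited via \cite[p.\ 66]{adams} in the hypercube case). With these in place, for the first estimate I would apply Theorem 1 of \cite{berens} to a function $f\in W_\infty^2(K_d)$: that theorem furnishes a constant $C$ independent of $f$ with $\|C_n(f)-f\|_p\leq C\|f\|_{2,\infty}\lambda_{n,p}$. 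Substituting the bound \eqref{stimalambdap}, namely $\lambda_{n,p}\leq 3(a+1)^2/\big((d!)^{1/p}(n+a)\big)$, immediately yields the rightmost inequality in \eqref{stimafirst} with $\tilde C:=3C(a+1)^2/(d!)^{1/p}$.

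For the second estimate I would apply \cite[Theorem 2]{berens} with $k=d+2$ and $p=1$ to $f\in\mathscr{L}^1(K_d)$; that theorem gives $\|C_n(f)-f\|_1\leq C\big(\lambda_{n,1}\|f\|_1+\omega_{d+2,1}(f,\lambda_{n,1}^{1/(d+2)})\big)$ with $C$ independent of $f$. Inserting the $p=1$ instance of \eqref{stimalambdap}, that is $\lambda_{n,1}\leq 3(a+1)^2/\big(d!\,(n+a)\big)$, into both the coefficient of $\|f\|_1$ and the argument of the modulus of smoothness produces exactly \eqref{stimasecond}; here one uses that $\omega_{d+2,1}(f,\cdot)$ is nondecreasing in its step, so that replacing $\lambda_{n,1}$ by its upper bound only enlarges the right-hand side.

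The substantive work has already been done in the explicit computation of $C_n(pr_i)$, $C_n(pr_i^2)$, and hence $\lambda_{n,p}$ leading to \eqref{stimalambdap}; the proof itself is then essentially a citation. The only point demanding a little care is checking that the hypotheses of \cite[Theorem 1, Theorem 2]{berens} are met for $K_d$ rather than merely for $Q_d$, but since the simplex is convex, compact, and has non-empty interior, the cone property and the resulting estimates transfer verbatim, exactly as in the hypercube case treated above. I therefore expect no genuine obstacle beyond bookkeeping with the constants.
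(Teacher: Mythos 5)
Your proposal matches the paper's own argument: the paper likewise obtains \eqref{stimafirst} from Theorem 1 of \cite{berens} and \eqref{stimasecond} from Theorem 2 of \cite{berens}, plugging in the previously established bound \eqref{stimalambdap} on $\lambda_{n,p}$ (the cone-property hypothesis having been checked via \cite[p.~66]{adams} as in the hypercube case). No substantive difference.
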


\section{Preservation properties}

\noindent In this section we shall investigate  some shape and regularity preserving properties of the $C_n$'s proving that, under suitable
assumptions, they preserve convexity and Lipschitz-continuity.

Relation \eqref{newexp} yields that some of the preservation
properties of the $B_n$'s are naturally shared by the $C_n$'s.

First of all we investigate  the behavior of the sequence
$(C_n)_{n \geq 1}$ on  Lipschitz-continuous functions and, to this
end, we recall some basic definitions.

Here we shall assume that $K$ is metrizable and we denote by  $\rho$  the metric on $K$ which induces its topology.
The $\rho$-modulus of continuity of a given $f \in \mathscr{C}(K)$ with respect to $\delta>0$ is then defined by
\begin{equation}\label{omegarho}
\omega_{\rho}(f,\delta):=\sup\{|f(x)-f(y)|\, \mid \,x,y\in K, \rho(x,y)\leq \delta\}.
\end{equation}

Furthermore, for any
$M\geq 0$ and $0<\alpha\leq 1$, we denote by
\begin{equation}\label{lipschitz}
\textrm{Lip} (M,\alpha):=\{f\in \mathscr{C}(K)\, \mid \, |f(x)-f(y)|\leq M\rho(x,y)^{\alpha}\textrm{ for every }x,y\in K\}
\end{equation}
the space of all H\"{o}lder continuous functions with exponent $\alpha$ and constant $M$. In particular, Lip$(M, 1)$ is  the space of all Lipschitz continuous functions with constant $M$. Assume that
\begin{equation}\label{hpthm1.6.6}
\omega_{\rho}(f,t\delta)\leq (1+t)\omega_{\rho}(f,\delta)
\end{equation}
for every $f\in \mathscr{C}(K)$, $\delta,t>0$.

  From now on we  suppose that    there exists $c \geq 1$ such that
\begin{equation}\label{condlip}
T(\textrm{Lip}( 1, 1) )\subset \textrm{Lip}(c,1),
\end{equation}
or, equivalently,
\begin{equation}\label{condlip2}
T(\textrm{Lip}( M, 1) )\subset \textrm{Lip}(cM,1),
\end{equation}
for every $M\geq 0$.

For instance, the Markov operators $T_1$, $S_d$ and $T_d$ of parts $1$, $2$ and $3$ of Examples \ref{esCn}, satisfy condition \eqref{condlip} with $c=1$, by considering on $[0,1]$ the usual metric and on $Q_d$ and $K_d$ the $l_1$-metric, i.e., the metric generated by the $l_1$-norm (see \cite[p. 124]{ACMLR2014}).

Under  condition \eqref{condlip} it has been shown that (see \cite[Theorem 3.3.1]{ACMLR2014}) 
\begin{equation}\label{lip1}
B_n(f) \in \textrm{Lip}(cM, 1),
\end{equation}
for every $f \in \textrm{Lip}(M,1)$ and $n \geq 1$. Moreover (see \cite[Corollary 3.3.2]{ACMLR2014}), for every $f \in \mathscr{C}(K)$, $\delta >0$ and $n \geq 1$,
\begin{equation}\label{lip2}
\omega_{\rho}(B_n(f), \delta) \leq (1+c) \omega_{\rho}(f, \delta)
\end{equation}
and
\begin{equation}\label{lip3}
B_n(\textrm{Lip}(M, \alpha)) \subset \textrm{Lip}(c^\alpha M, \alpha)
\end{equation}
for every $M>0$ and $\alpha\in]0,1]$.

Then, taking \eqref{lip1} into account, after noting that $I_n(f)
\in\textrm{ Lip}(M, \alpha)$  whenever $f \in \textrm{Lip}(M, \alpha)$ (see
\eqref{fn}),
 we easily deduce the following
proposition.

\begin{proposition}\label{proplipschitz}
Assume that condition \eqref{condlip} is satisfied. Then, for every $f
\in\mathrm{Lip}(M, 1)$ and $n \geq 1$, $ C_n(f) \in \mathrm{Lip}\left(cM, 1
\right)$.
\end{proposition}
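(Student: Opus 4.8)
The plan is to exploit the factorization \eqref{newexp}, namely $C_n(f)=B_n(I_n(f))$, which reduces the statement to two preservation facts chained together: first that the auxiliary operator $I_n$ leaves $\mathrm{Lip}(M,1)$ invariant, and second that $B_n$ maps $\mathrm{Lip}(M,1)$ into $\mathrm{Lip}(cM,1)$. The latter is precisely \eqref{lip1}, available under hypothesis \eqref{condlip}, so the real content lies in the first fact, which is the remark recorded just before the proposition.

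First I would establish that $I_n(f)\in\mathrm{Lip}(M,1)$ whenever $f\in\mathrm{Lip}(M,1)$. Starting from the definition \eqref{fn}, for $x,y\in K$ I would write
\begin{equation*}
|I_n(f)(x)-I_n(f)(y)|\leq \int_K\left| f\!\left(\tfrac{n}{n+a}x+\tfrac{a}{n+a}t\right)-f\!\left(\tfrac{n}{n+a}y+\tfrac{a}{n+a}t\right)\right| d\mu_n(t),
\end{equation*}
and then bound each integrand, using $f\in\mathrm{Lip}(M,1)$, by $M\,\rho\!\left(\tfrac{n}{n+a}x+\tfrac{a}{n+a}t,\ \tfrac{n}{n+a}y+\tfrac{a}{n+a}t\right)$. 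The decisive point is that the map $\Phi_t\colon z\mapsto \tfrac{n}{n+a}z+\tfrac{a}{n+a}t$ does not increase $\rho$-distances: for the norm-induced metrics of the concrete settings ($[0,1]$, $Q_d$, $K_d$) one has $\Phi_t(x)-\Phi_t(y)=\tfrac{n}{n+a}(x-y)$, whence $\rho(\Phi_t(x),\Phi_t(y))=\tfrac{n}{n+a}\rho(x,y)\leq\rho(x,y)$. Since $\mu_n$ has total mass $1$, the integral does not spoil the constant, giving $|I_n(f)(x)-I_n(f)(y)|\leq M\rho(x,y)$, i.e. $I_n(f)\in\mathrm{Lip}(M,1)$ (in fact with the smaller constant $\tfrac{n}{n+a}M$).

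With this in hand the conclusion is immediate: applying \eqref{lip1} to $I_n(f)\in\mathrm{Lip}(M,1)$ yields $B_n(I_n(f))\in\mathrm{Lip}(cM,1)$, and \eqref{newexp} identifies $B_n(I_n(f))$ with $C_n(f)$, so $C_n(f)\in\mathrm{Lip}(cM,1)$ for every $n\geq 1$.

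I expect the only genuine obstacle to be the $\rho$-non-expansiveness of $\Phi_t$ in the first step. For a metric arising from a norm this is transparent, since $\Phi_t$ is then an affine contraction; for a general metrizable $K$ one must assume that $\rho$ is compatible with the linear (convex) structure of $X$ so that the estimate on the integrand holds. This compatibility is exactly what underlies the remark preceding the proposition, and it is satisfied in all the frameworks of interest, where $\rho$ is generated by a norm.
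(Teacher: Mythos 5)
Your proposal is correct and follows essentially the same route as the paper: the paper's proof likewise observes that $I_n(f)\in\mathrm{Lip}(M,1)$ whenever $f\in\mathrm{Lip}(M,1)$ and then combines \eqref{newexp} with \eqref{lip1}. Your additional remark that the non-expansiveness of $z\mapsto\frac{n}{n+a}z+\frac{a}{n+a}t$ requires $\rho$ to be compatible with the convex structure is a fair point the paper leaves implicit, and it holds in all the settings considered there.
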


Moreover,  \eqref{lip2} and \eqref{lip3} yield the following
result, since $\omega_\rho(I_n(f),\delta)\leq \omega_\rho(f,\delta)$. 
\begin{proposition}
Assume  that condition  \eqref{condlip}  is  satisfied. Then, for
every $f \in \mathscr{C}(K)$, $\delta >0$ and $n \geq 1$,
\begin{equation*}
\omega_{\rho}(C_n(f), \delta) \leq (1+c)  \ \omega_{\rho}\left(f,  \delta\right)
\end{equation*}
and
\begin{equation*}
 C_n(\mathrm{Lip}(M, \alpha)) \subset \mathrm{Lip}(c^\alpha
M, \alpha)
\end{equation*}
for every $M>0$ and $0<\alpha\leq 1$.

In particular, if $T(\mathrm{Lip}(1,1)) \subset \mathrm{Lip}(1,1)$, then
\begin{equation*}
\omega_{\rho}(C_n(f), \delta) \leq 2  \ \omega_{\rho}\left(f,  \delta\right)
\end{equation*}
and
\begin{equation*}
 C_n(\mathrm{Lip}(M, \alpha)) \subset \mathrm{Lip}(M,
\alpha)
\end{equation*}
for every $M>0$ and $0<\alpha\leq 1$.
\end{proposition}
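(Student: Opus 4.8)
The plan is to exploit the factorization $C_n=B_n\circ I_n$ from \eqref{newexp} and to reduce both assertions to the already-established properties \eqref{lip2} and \eqref{lip3} of the Bernstein-Schnabl operators $B_n$. The only genuinely new ingredient I need is that the auxiliary operator $I_n$ defined in \eqref{fn} does not increase the $\rho$-modulus of continuity and leaves each H\"older class invariant; once this is in hand, the two displayed estimates follow by composing with $B_n$.

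First I would establish the two properties of $I_n$. For fixed $t\in K$ and $n\geq 1$ consider the affine self-map of $K$ given by $\Phi_{n,t}(x):=\frac{n}{n+a}\,x+\frac{a}{n+a}\,t$ (this maps $K$ into $K$ by convexity), so that $I_n(f)(x)=\int_K f(\Phi_{n,t}(x))\,d\mu_n(t)$. The decisive observation is that $\Phi_{n,t}$ is non-expansive with respect to $\rho$: since the coefficient $\frac{n}{n+a}$ does not exceed $1$, one has $\rho(\Phi_{n,t}(x),\Phi_{n,t}(y))\leq\rho(x,y)$ for all $x,y\in K$ (for a norm-induced metric this is immediate, since $\Phi_{n,t}(x)-\Phi_{n,t}(y)=\frac{n}{n+a}(x-y)$). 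Consequently, if $f\in\mathrm{Lip}(M,\alpha)$ then for every $x,y\in K$
\[
|I_n(f)(x)-I_n(f)(y)|\leq\int_K|f(\Phi_{n,t}(x))-f(\Phi_{n,t}(y))|\,d\mu_n(t)\leq M\!\int_K\rho(\Phi_{n,t}(x),\Phi_{n,t}(y))^\alpha\,d\mu_n(t)\leq M\rho(x,y)^\alpha,
\]
using that $\mu_n$ is a probability measure; hence $I_n(f)\in\mathrm{Lip}(M,\alpha)$. The same chain, replacing the Lipschitz bound by $\omega_\rho(f,\cdot)$ and invoking the monotonicity of the modulus, gives $|I_n(f)(x)-I_n(f)(y)|\leq\omega_\rho(f,\rho(x,y))$, whence $\omega_\rho(I_n(f),\delta)\leq\omega_\rho(f,\delta)$ for every $\delta>0$.

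With these at hand the conclusions are immediate. For the modulus estimate I apply \eqref{lip2} to the function $I_n(f)$ and then the contraction property of $I_n$:
\[
\omega_\rho(C_n(f),\delta)=\omega_\rho(B_n(I_n(f)),\delta)\leq(1+c)\,\omega_\rho(I_n(f),\delta)\leq(1+c)\,\omega_\rho(f,\delta).
\]
For the H\"older inclusion, if $f\in\mathrm{Lip}(M,\alpha)$ then $I_n(f)\in\mathrm{Lip}(M,\alpha)$, so \eqref{lip3} yields $C_n(f)=B_n(I_n(f))\in\mathrm{Lip}(c^\alpha M,\alpha)$. The final ``in particular'' part is merely the case $c=1$: substituting $c=1$ turns $1+c$ into $2$ and $c^\alpha$ into $1$.

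The main obstacle I expect is justifying the non-expansiveness $\rho(\Phi_{n,t}(x),\Phi_{n,t}(y))\leq\rho(x,y)$ in full generality, since $\rho$ is only assumed to metrize the topology of $K$ and need not interact well a priori with the linear structure of $X$. In the concrete settings of Examples \ref{esCn} (the interval with the usual metric, the hypercube and the simplex with the $l_1$-metric) the metric is induced by a norm and the inequality holds trivially through the factor $\frac{n}{n+a}\leq1$; this is precisely the compatibility that makes $I_n$ a contraction on moduli and is the only place where the metric structure genuinely enters.
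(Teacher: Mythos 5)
Your proof is correct and follows essentially the same route as the paper: the paper derives the proposition in one line from \eqref{lip2}, \eqref{lip3} and the asserted inequality $\omega_{\rho}(I_n(f),\delta)\leq\omega_{\rho}(f,\delta)$ (together with the fact, noted just before Proposition \ref{proplipschitz}, that $I_n$ preserves $\mathrm{Lip}(M,\alpha)$), which is exactly your decomposition $C_n=B_n\circ I_n$. The only difference is that you actually justify the non-expansiveness of $I_n$ via the maps $\Phi_{n,t}$ — a detail the paper leaves implicit — and your closing caveat about $\rho$ needing to be compatible with the linear structure is a fair observation about an assumption the paper does not make explicit.
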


 Below we state some further properties of the operators $C_n$'s for special functions $f\in\mathscr{C}(K)$. To this end we need some additional concepts.

 We
 first recall that, if $T$ is an arbitrary Markov operator on $\mathscr{C}(K)$,   a function $f \in \mathscr{C}(K)$ is said to be $T$-convex
 if
 \begin{equation*}
 f_{z, \alpha} \leq T(f_{z, \alpha}) \ \quad \ \textrm{for every} \ z
 \in K \ \textrm{and} \ \alpha \in [0,1],
 \end{equation*}
where $f_{z, \alpha}$ is defined by 
$f_{z,\alpha}(x):=f(\alpha x+(1-\alpha)z)$ ($x\in K$). 

If $K=[0,1]$ and $T_1$ denotes the operator \eqref{defT1}, then a function $f\in\mathscr{C}([0,1])$ is $T_1$-convex if and only if it  is convex. 

For $K=Q_d$ and $T=S_d$ (see \eqref{projh1}), then a function $f\in\mathscr{C}(Q_d)$ is $S_d$-convex if and only if $f$ is convex with respect to each variable. 

Finally, for $K=K_d$ and $T=T_d$ (see \eqref{pros}), a function $f\in\mathscr{C}(K_d)$ is $T_d$-convex  if and only if it is axially convex, i.e., it is convex on each segment parallel to a segment joining  two extreme points of $K_d$ (see \cite[Section 3.5]{ACMLR2014} for more details). 

In general, each convex function $f \in \mathscr{C}(K)$ is $T$-convex. Moreover $T$-axially convex functions are $T$-convex as well (see \cite[Definition 3.5.1 and remarks on  p. 148]{ACMLR2014}).

In \cite[Theorem 3.5.2]{ACMLR2014} it has been showed that, if
$(B_n)_{n \geq 1}$ is the sequence of Bernstein-Schnabl operators associated with $T$ and $T$ satisfies hypothesis \eqref{ass1} (or \eqref{ass1a}), then
\begin{equation*}
f \leq B_n(f) \leq T(f)  \qquad(n \geq 1).
\end{equation*}
 whenever $f \in \mathscr{C}(K)$ is $T$-convex. As a consequence 
 we have the following result.

\begin{proposition}\label{convex}
Under hypothesis \eqref{ass1} (or \eqref{ass1a}), if $f \in \mathscr{C}(K)$  is $T$-convex then, for any $n \geq 1$,
\begin{equation*}\label{limitation}
  C_n(f) \leq C_n(T(f)).
\end{equation*}

In particular if $f$ is $T$-convex and each $I_n(f)$ is $T$-convex, then for every $n\geq 1$,
\begin{equation*}
I_n(f) \leq C_n(f) \leq T(I_n(f)).
\end{equation*}
\end{proposition}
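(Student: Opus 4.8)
The plan is to assemble the statement from two facts already available in the excerpt: the factorization $C_n(f)=B_n(I_n(f))$ from \eqref{newexp}, and the Bernstein--Schnabl sandwiching $g\le B_n(g)\le T(g)$ valid for every $T$-convex $g\in\mathscr{C}(K)$, which is \cite[Theorem 3.5.2]{ACMLR2014} recalled immediately before the proposition. The only genuinely new observation needed is the elementary remark that $T$-convexity of a function already forces it to lie below its own image under $T$.

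For the first inequality I would begin by taking $\alpha=1$ in the definition of $T$-convexity: then $f_{z,1}(x)=f(x)$, so the defining inequality $f_{z,1}\le T(f_{z,1})$ collapses to $f\le T(f)$ on all of $K$. Since $C_n$ is a positive linear operator it is monotone, and applying it to $f\le T(f)$ yields $C_n(f)\le C_n(T(f))$, which is the first claim. No use of \eqref{ass1} is needed beyond what guarantees the operators are well defined; positivity alone does the work.

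For the second part I would invoke the representation $C_n(f)=B_n(I_n(f))$ from \eqref{newexp}. By hypothesis each $I_n(f)$ is a $T$-convex element of $\mathscr{C}(K)$, so the sandwiching recalled before the proposition, applied to $g:=I_n(f)$, gives $I_n(f)\le B_n(I_n(f))\le T(I_n(f))$ for every $n\ge 1$. Rewriting $B_n(I_n(f))$ as $C_n(f)$ produces exactly $I_n(f)\le C_n(f)\le T(I_n(f))$, as desired.

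I do not expect a serious obstacle: the argument is essentially bookkeeping around two pre-established results. The points requiring a little care are the elementary identity that $T$-convexity already delivers $f\le T(f)$ (the case $\alpha=1$), and the observation that $I_n(f)\in\mathscr{C}(K)$ — guaranteed by \eqref{fn} — so that the cited Bernstein--Schnabl sandwiching is legitimately applicable to it. I would also note that the $T$-convexity of $f$ itself, while it frames the proposition, is not actually used in the chain of the second part, whose operative hypothesis is the $T$-convexity of each $I_n(f)$.
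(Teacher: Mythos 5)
Your proof is correct and matches the paper's (implicit) argument: the paper states the proposition as an immediate consequence of the sandwich $g\le B_n(g)\le T(g)$ for $T$-convex $g$ together with the factorization $C_n(f)=B_n(I_n(f))$, which is exactly what you use. Your extraction of $f\le T(f)$ from the $\alpha=1$ case of the definition, and your remarks that the first inequality needs only positivity of $C_n$ and that the second chain uses only the $T$-convexity of the $I_n(f)$, are all accurate.
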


Apart from the case of the interval $[0,1]$ and the classical Bernstein operators, in general    Bernstein-Schnabl operators
do not preserve convexity. A simple counterexample is given by the
function $f:=|pr_1-pr_2|$ defined on the two-dimensional simplex
$K_2$ (see \cite[p. 468]{Sa-1991}). Hence, in general, the $C_n$'s do
not preserve convexity, too.
But   it is possible to determine sufficient conditions in order that the
$B_n$'s (and, hence, the $C_n$'s) preserve convexity.

For a given $f\in\mathscr{C}(K)$, we set
\begin{equation}\label{rasa2.1.17}
\tilde{f}(s,t):=f\left(\frac{s+t}{2}\right)\quad (s,t\in K)
\end{equation}
and
\begin{equation}\label{rasa2.1.18}
\begin{split}
&\Delta(\tilde{f};x,y):=\iint_{K^2} \tilde{f}(s,t)\,d\tilde{\mu}_x^T(s)d\tilde{\mu}_x^T(t)+\\&+\iint_{K^2} \tilde{f}(s,t)\,d\tilde{\mu}_y^T(s)d\tilde{\mu}_y^T(t)-2\iint_{K^2} \tilde{f}(s,t)\,d\tilde{\mu}_x^T(s)d\tilde{\mu}_y^T(t)\\
&=B_2(f)(x)+B_2(f)(y)-2\iint_{K^2} f\left(\frac{s+t}{2}\right)\,d\tilde{\mu}_x^T(s)d\tilde{\mu}_y^T(t)
\end{split}
\end{equation}
for every $x,y\in K$.

 \begin{theorem}
 Suppose that $T$ satisfies the following assumptions:
 \begin{itemize}
 \item[($c_1$)] $T$ maps continuous convex functions into (continuous) convex functions;
 \item[($c_2$)] $\Delta(\tilde{f}; x,y)\geq 0$ for every convex function $f\in\mathscr{C}(K)$ and for every $x,y\in K$.
 \end{itemize}
 Then each $C_n$ maps continuous convex functions into (continuous) convex functions.
 \end{theorem}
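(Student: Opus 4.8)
The plan is to exploit the factorization $C_n=B_n\circ I_n$ from \eqref{newexp} and reduce everything to the Bernstein--Schnabl operators. First I would check that $I_n$ itself preserves convexity: by \eqref{fn}, for each fixed $t$ the map $x\mapsto\frac{n}{n+a}x+\frac{a}{n+a}t$ is affine, so $x\mapsto f\bigl(\frac{n}{n+a}x+\frac{a}{n+a}t\bigr)$ is convex whenever $f$ is, and integrating against $\mu_n$ preserves convexity; hence $I_n(f)$ is convex. Since $C_n(f)=B_n(I_n(f))$, it suffices to prove that, under $(c_1)$ and $(c_2)$, $B_n$ maps continuous convex functions into convex functions. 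As $B_n(g)\in\mathscr{C}(K)$, I only need midpoint convexity, continuity then upgrading it to full convexity.

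The main tool is the symmetric multilinear form
\[
\Psi(z_1,\dots,z_n):=\int_{K^n} g\Bigl(\frac{s_1+\cdots+s_n}{n}\Bigr)\,d\tilde\mu^T_{z_1}(s_1)\cdots d\tilde\mu^T_{z_n}(s_n),
\]
so that $B_n(g)(z)=\Psi(z,\dots,z)$. Fixing all arguments but the $i$-th, the inner integrand is a convex function of $s_i$ (affine argument into the convex $g$); integrating out the remaining variables leaves a convex $h$, and the dependence on $z_i$ is $\int_K h\,d\tilde\mu^T_{z_i}=T(h)(z_i)$, which by $(c_1)$ is convex. Thus $\Psi$ is separately convex in each argument. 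Writing $m:=\frac{x+y}{2}$ and replacing each occurrence of $m$ by $\frac{x+y}{2}$ one coordinate at a time, separate convexity yields
\[
B_n(g)(m)=\Psi(m,\dots,m)\le\frac{1}{2^{n}}\sum_{\varepsilon\in\{x,y\}^{n}}\Psi(\varepsilon_1,\dots,\varepsilon_n).
\]

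By symmetry of $\Psi$ the summand depends only on the number $k$ of coordinates equal to $x$; call it $\psi(k)$, so $\psi(0)=B_n(g)(y)$ and $\psi(n)=B_n(g)(x)$. The crux is to show that $k\mapsto\psi(k)$ is (discretely) convex. Fixing $n-2$ coordinates at a configuration with $k-1$ copies of $x$ and letting two coordinates vary, the second difference becomes
\[
\psi(k+1)-2\psi(k)+\psi(k-1)=\iint_{K^2}\phi\,d\tilde\mu_x^T\,d\tilde\mu_x^T+\iint_{K^2}\phi\,d\tilde\mu_y^T\,d\tilde\mu_y^T-2\iint_{K^2}\phi\,d\tilde\mu_x^T\,d\tilde\mu_y^T,
\]
where $\phi(s_1,s_2)=\int_{K^{n-2}} g\bigl(\frac{s_1+s_2+s_3+\cdots+s_n}{n}\bigr)\prod_{j=3}^{n}d\tilde\mu^T_{z_j}(s_j)$ depends on $(s_1,s_2)$ only through $s_1+s_2$. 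Hence $\phi(s_1,s_2)=q\bigl(\frac{s_1+s_2}{2}\bigr)$ for the convex function $q(v)=\int_{K^{n-2}} g\bigl(\frac{2v+s_3+\cdots+s_n}{n}\bigr)\prod_{j=3}^{n}d\tilde\mu^T_{z_j}(s_j)$ in $\mathscr{C}(K)$, and the displayed quantity is exactly $\Delta(\tilde q;x,y)\ge 0$ by $(c_2)$ (see \eqref{rasa2.1.17}, \eqref{rasa2.1.18}). Convexity of $\psi$ gives the chord bound $\psi(k)\le\frac{n-k}{n}\psi(0)+\frac{k}{n}\psi(n)$; multiplying by $\binom{n}{k}$, summing, and using $\sum_k k\binom{n}{k}=n2^{n-1}$ shows $\frac{1}{2^{n}}\sum_k\binom{n}{k}\psi(k)\le\frac12\bigl(\psi(0)+\psi(n)\bigr)$, whence $B_n(g)(m)\le\frac12\bigl(B_n(g)(x)+B_n(g)(y)\bigr)$. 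Midpoint convexity and continuity finish $B_n$, and therefore $C_n=B_n\circ I_n$, preserving convexity.

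I expect the delicate step to be the identification of the discrete second difference of $\psi$ with $\Delta(\tilde q;x,y)$: one must verify that integrating out the $n-2$ fixed coordinates genuinely collapses the dependence on the two active variables to their sum, so that $(c_2)$ applies to an honest continuous convex function $q$ on $K$ (here the coefficient bookkeeping $\frac{2}{n}+(n-2)\frac{1}{n}=1$ guarantees the argument of $g$ stays in $K$). The separate-convexity reduction furnished by $(c_1)$ and the final binomial averaging are then routine.
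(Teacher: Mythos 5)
Your proposal is correct and follows the same route as the paper: factor $C_n=B_n\circ I_n$ via \eqref{newexp}, observe that $I_n$ maps convex functions to convex functions, and invoke the fact that under ($c_1$) and ($c_2$) each Bernstein--Schnabl operator $B_n$ preserves convexity. The only difference is that the paper obtains this last fact by citing \cite[Theorem 3.4.3]{ACMLR2014}, whereas you prove it from scratch; your reconstruction (separate convexity of $\Psi$ from ($c_1$), discrete convexity of $k\mapsto\psi(k)$ from ($c_2$) applied to the convex function $q$ with $\tilde q=\phi$, and the binomial chord bound) is sound, needing only the trivial remark that the second-difference step requires $n\geq 2$, the case $n=1$ being ($c_1$) itself.
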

\begin{proof}
According to \cite[Theorem 3.4.3]{ACMLR2014}, under assumptions ($c_1$) and ($c_2$), each Bernstein-Schnabl operator $B_n$ maps continuous convex functions into (continuous) convex functions. Therefore the result follows from \eqref{newexp} taking into account that each $I_n(f)$ is convex provided that $f\in\mathscr{C}(K)$ is convex.
\end{proof}

\begin{remark}
In \cite[Remark 3.4.4 and Examples 3.4.5-3.4.11]{ACMLR2014} there are several examples of settings where conditions ($c_1$) and ($c_2$) are satisfied. This is the case, in particular, when $K=[0,1]$ and $T=T_1$ (see \eqref{defT1}). Therefore, all the operators defined by \eqref{new2.8}, \eqref{new2.9} and \eqref{new2.10} preserve the convexity.
\end{remark}

Finally we point out that, if $K=K_d$, $d\geq 1$, then the Bernstein operators on $\mathscr{C}(K_d)$, i.e., the Bernstein-Schnabl operators associated with the Markov operator \eqref{pros}, preserve the axial convexity  (\cite[Theorem 6.3.2]{ac}, \cite[Theorem 3.5.9]{ACMLR2014}). On the other hand, if $f\in\mathscr{C}(K_d) $ 
 is axially convex, then $I_n(f)$ is axially convex too for every $n\geq 1$. Therefore, on account of \eqref{newexp}, we conclude that
\begin{corollary}
Considering the canonical simplex $K_d$ of $\mathbf{R}^d$, $d\geq 1$, then the operators $C_n$ defined by \eqref{new2.15} map continuous axially convex functions on $K_d$ into (continuous) axially convex functions.
\end{corollary}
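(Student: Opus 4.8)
The plan is to exploit the factorization $C_n = B_n \circ I_n$ supplied by \eqref{newexp}, together with the fact recalled just above the statement, namely that the Bernstein--Schnabl operators $B_n$ associated with the canonical Markov operator $T_d$ of \eqref{pros} preserve axial convexity on $\mathscr{C}(K_d)$. Granting this, the whole corollary reduces to a single additional ingredient: that the auxiliary operator $I_n$ defined by \eqref{fn} also maps axially convex functions into axially convex functions. Indeed, once both $I_n$ and $B_n$ are known to preserve axial convexity, for any axially convex $f$ the function $I_n(f)$ is axially convex, and hence $C_n(f) = B_n(I_n(f))$ is axially convex as well. Thus the real content to be checked is the invariance of axial convexity under $I_n$.

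To carry out that step I would argue as follows. The extreme points of $K_d$ are $0, e_1, \ldots, e_d$, so a function $g \in \mathscr{C}(K_d)$ is axially convex exactly when, for each direction $v$ of the form $e_i$ or $e_i - e_j$ (the differences of two extreme points) and each $x \in K_d$, the scalar function $\lambda \mapsto g(x + \lambda v)$ is convex on the interval $J_{x,v} := \{\lambda \in \mathbf{R} \mid x + \lambda v \in K_d\}$. Fixing such a $v$, a point $x$, and $\lambda \in J_{x,v}$, I would rewrite, using \eqref{fn},
\begin{equation*}
I_n(f)(x+\lambda v) = \int_{K_d} f\!\left(\frac{n}{n+a}\,x + \frac{a}{n+a}\,t + \lambda\,\frac{n}{n+a}\,v\right) d\mu_n(t).
\end{equation*}
For each fixed $t \in K_d$ put $y_t := \frac{n}{n+a}x + \frac{a}{n+a}t$ and $w := \frac{n}{n+a}v$. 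Since $w$ is a positive multiple of $v$ it is parallel to an axial direction, and since $x + \lambda v \in K_d$ and $t \in K_d$, convexity of $K_d$ gives $y_t + \lambda w = \frac{n}{n+a}(x+\lambda v) + \frac{a}{n+a}t \in K_d$ throughout $J_{x,v}$. Hence axial convexity of $f$ makes $\lambda \mapsto f(y_t + \lambda w)$ convex on $J_{x,v}$ for every $t$, and integrating this family against the probability measure $\mu_n$ preserves convexity, so $\lambda \mapsto I_n(f)(x+\lambda v)$ is convex on $J_{x,v}$. This is precisely the axial convexity of $I_n(f)$.

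The main obstacle, such as it is, lies in the bookkeeping of domains and directions in the previous paragraph: one must verify that the affine contraction implicit in $I_n$ carries the axial direction $v$ to a parallel (still axial) direction $w$, and that the whole segment $\{x+\lambda v : \lambda \in J_{x,v}\}$ stays inside $K_d$ after mixing with an arbitrary $t \in K_d$, so that the hypothesis of axial convexity on $f$ is genuinely applicable for every value of $t$. Both facts follow from the convexity of $K_d$ and the positivity of the factor $n/(n+a)$, and the transfer of convexity through the integral is the elementary observation that a $\mu_n$-average of functions convex on a common interval is again convex there. With the invariance of axial convexity under $I_n$ established, the cited preservation of axial convexity by the Bernstein operators on $K_d$ and the identity \eqref{newexp} together complete the proof.
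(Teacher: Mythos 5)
Your proof is correct and follows essentially the same route as the paper: the identity $C_n=B_n\circ I_n$ from \eqref{newexp}, the cited preservation of axial convexity by the Bernstein operators on $K_d$, and the preservation of axial convexity by $I_n$. The only difference is that the paper merely asserts the $I_n$ step, whereas you supply the (correct) verification that the affine contraction inside $I_n$ sends axial segments to parallel axial segments lying in $K_d$ and that averaging against $\mu_n$ preserves convexity.
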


 \bigskip
\noindent Francesco Altomare and Mirella Cappelletti Montano \\
Dipartimento di Matematica\\
Universit\`{a} degli Studi di Bari "A. Moro"\\
Campus Universitario, Via E. Orabona n. 4\\
70125-Bari, Italy\\
e-mail: francesco.altomare@uniba.it, mirella.cappellettimontano@uniba.it\\

\noindent Vita Leonessa\\
Dipartimento di Matematica, Informatica ed Economia\\
Universit\`{a} degli Studi della Basilicata\\
Viale Dell' Ateneo Lucano n. 10, Campus di Macchia Romana\\ 85100-Potenza,
Italy\\
e-mail: vita.leonessa@unibas.it \\

\noindent Ioan Ra\c{s}a\\
Department of Mathematics \\Technical University of Cluj-Napoca
\\Str. Memorandumului 28 \\RO-400114 Cluj-Napoca, Romania\\
e-mail: Ioan.Rasa@math.utcluj.ro\\


\begin{thebibliography}{99}

\bibitem{adams} R. A. Adams, Sobolev Spaces, Academic Press, New
York, 1975.

\bibitem{ac}  F. Altomare and M. Campiti, Korovkin-Type
Approximation Theory and its Applications,  De Gruyter
Studies in Mathematics \textbf{17}, Walter de Gruyter,
Berlin-New York, 1994.



 
\bibitem{ACML2010} F. Altomare, M. Cappelletti Montano and V.
Leonessa,\:\textit{On a generalization of Kantorovich operators on
simplices and hypercubes}, Adv. Pure Appl. Math. \textbf{1} (2010),
no. 3, 359--385.



  


\bibitem{ACMLRJFA2014} F. Altomare, M. Cappelletti Montano, V. Leonessa and I. Ra\c{s}a,
\:\textit{On differential operators associated with Markov operators}, J. Funct. Anal. \textbf{266} (2014), no. 6, 3612--3631.

\bibitem{ACMLR2014} F. Altomare, M. Cappelletti Montano, V. Leonessa and I. Ra\c{s}a, Markov Operators, Positive Semigroups and Approximation Processes, de Gruyter Studies in Mathematics \textbf{61}, Walter de Gruyter GmbH, Berlin/Boston,  2014.

\bibitem{AL2006}  F. Altomare and V. Leonessa,\:\textit{On a sequence of
positive linear operators associated with a continuous selection of
Borel measures},\ Mediterr. J. Math. \textbf{3} (2006), no. 3-4, 363--382.


  
\bibitem{AL} F. Altomare and V. Leonessa, An invitation to the study of
evolution equations by means of positive linear operators, \emph{Lecture
Notes of Seminario Interdisciplinare di Matematica}, Vol. \textbf{8} (2009), pp. 1-
41.



    \bibitem{berens} H. Berens and R. De Vore,\:\textit{Quantitative Korovkin theorems for positive
linear operators on $L_p$-spaces}, \ Trans. Amer. Math. Soc.
\textbf{245} (1978), 349--361.

\bibitem{CV} J. de la Cal and A.M. Valle,  \:\textit{A generalization of Bernstein-Kantorovich operators}, J. of Math. Anal. Appl. \textbf{252} (2000), no. 2, 750--766.

 \bibitem{E} J. Edwards, Treatise on the Integral Calculus, Vol. \textbf{2}, Macmillan, London, 1922. 

\bibitem{GRR} H. Gonska, I.Ra\c{s}a and M.-D. Rusu,  \:\textit{Applications of an Ostrowski-type inequality}, J. Comput. Anal. Appl. \textbf{14} (2012), no. 1, 19--31.


 

\bibitem{Kantor} L.V. Kantorovich, \:\textit{Sur certains
d\'{e}veloppements suivant les polyn\^omes de la forme de B.
Bernstein I, II}, \ C.R. Acad. URSS (1930), 563-568 and 595-600.

\bibitem{N} J. Nagel,  \:\textit{Kantorovich operators of second order}, Monatsh. Math. \textbf{95} (1983), 33--44.



 \bibitem{Quak} E. Quak, \textit{Multivariate $L^p$-error estimates for positive linear operators via the first-order $\tau$-modulus}, J. Approx. Theory \textbf{56} (1989), 277--286.
 

\bibitem{Sa-1991} T. Sauer, \:\textit{Multivariate Bernstein polynomials and convexity}, Comput.  Aided Geom. Design \textbf{8} (1991), no. 6, 465--478.

 

\bibitem{Stancu}  D.D. Stancu,\:\textit{On a generalization of Bernstein polynomials}, \
 Studia Univ. Babe\c{s}-Bolyai Math (Cluj) \textbf{14} (1969), 31-45.

 \bibitem{zhou} D.X. Zhou,\:\textit{Converse theorems for multidimensional Kantorovich
operators}, Anal. Math. \textbf{19} (1993), 85-100.

 \end{thebibliography}
\end{document}